\newtheorem{lemma}{Lemma}
\newtheorem{prop}[lemma]{Proposition}
\newtheorem{cor}[lemma]{Corollary}
\newtheorem{con}[lemma]{Conjecture}
\newtheorem{defi}[lemma]{Definition}
\newtheorem{thm}[lemma]{Theorem}
\newtheorem{ex}[lemma]{Example}
\newtheorem{rmk}[lemma]{Remark}
\newcommand\A{{\mathbb A}}
\newcommand\DD{{{\mathcal V}_X}}
\newcommand{\bb}[1]{\mathbb{#1}}
\newcommand{\fr}[1]{\mathfrak{#1}}
\newcommand{\ra}{\rightarrow}
\newcommand{\D}{\mathcal{V}}
\newcommand{\ol}[1]{\overline{#1}}
\newcommand{\Der}{\mathrm{Der}}
\newcommand{\Ind}{\mathrm{Ind}}
\newcommand{\Res}{\mathrm{Res}}
\newcommand{\Hom}{\mathrm{Hom}}
\newcommand{\Sp}{\mathbb{S}}
\newcommand{\del}{\partial}
\newcommand{\U}{\mathcal{U}}
\DeclareMathOperator{\rank}{rank}
\begin{document}

\title{Representations of Lie algebras of vector fields on affine varieties}
\author{Yuly Billig}
\address{School of Mathematics and Statistics, Carleton University, Ottawa, Canada}
\email{billig@math.carleton.ca}
\author{Vyacheslav Futorny}
\address{ Instituto de Matem\'atica e Estat\'\i stica,
Universidade de S\~ao Paulo,  S\~ao Paulo,
 Brasil}
 \email{futorny@ime.usp.br}
\author{Jonathan Nilsson}
\address{Department of Mathematics, \"{O}rebro University, \"{O}rebro, Sweden}
\email{jonathan.nilsson@oru.se}
\let\thefootnote\relax\footnotetext{{\it 2010 Mathematics Subject Classification.}
Primary 17B20, 17B66; Secondary 13N15.}
\date{}

\begin{abstract} For an irreducible affine variety $X$ over an algebraically closed field of characteristic zero we define two new classes of modules over the Lie algebra of vector fields on $X$ - gauge modules and Rudakov modules, which admit a compatible action of the algebra of functions. Gauge modules are generalizations of  modules of tensor densities whose construction was inspired by non-abelian gauge theory. Rudakov modules are generalizations of a family of induced modules over the Lie algebra of derivations of a polynomial ring studied by Rudakov \cite{Ru}. 
We prove general simplicity theorems for these two types of modules and establish  a pairing between them. 
\end{abstract}

\maketitle

\section*{Introduction}

Classification of complex simple finite-dimensional Lie algebras by Killing (1889), \cite{K} and Cartan (1894) \cite{C1} shaped the development of Lie theory in the first half of the 20th century.  
Since Sophus Lie, the Lie groups and corresponding Lie algebras (as infinitesimal transformations) were related to the symmetries of geometric structures which need not be finite-dimensional.  
Later infinite-dimensional Lie groups and algebras were connected with the symmetries of systems which have an infinite
number of independent degrees of freedom, for example in Conformal
Field Theory, e.g. \cite{BPZ}, \cite{TUY}.  The discovery of first four classes of simple infinite-dimensional Lie algebras goes back to Sophus Lie who introduced certain 
pseudogroups of transformations in small dimensions. 
This work was completed by Cartan who showed that corresponding simple Lie algebras are of type  
$W_n$, $S_n$, $H_n$ and $K_n$ \cite{C2}. These four classes of Cartan type algebras were the first examples of simple infinite-dimensional Lie algebras. The general theory of simple infinite-dimensional Lie algebras at large is still undeveloped, in particular their representation theory.  

The first Witt algebra ${\mathcal W}_1$ is 
the Lie algebra of polynomial vector fields on a circle whose
 universal central extension is the famous Virasoro algebra which plays a crucial role in quantum field theory.   
Mathieu \cite{Ma} classified irreducible modules with finite-dimensional weight spaces for the first Witt algebra ${\mathcal W}_1$.
Higher rank Witt algebras ${\mathcal W}_n$ are 
%
simple Lie algebras of polynomial vector fields on $n$-dimensional torus.   
 Significant efforts were required to generalize the results of Mathieu for an arbitrary $n$: \cite{She}, \cite{La}, \cite{E1}, \cite{E2}, \cite{Bi}, \cite{MZ}, \cite{BF1}, \cite{BF2}, 
resulting  in the classification of simple  weight modules  with finite-dimensional
weight spaces in \cite{BF2}.  We note that 
 understanding of the representations of ${\mathcal W}_n$ is also important for the representation theory of \emph{toroidal} Lie algebras \cite{Bil}.

  The first Witt algebra can be realized as the algebra of  meromorphic
vector fields on the Riemann sphere $\mathbb P^1(\mathbb C)$ which are holomorphic outside of $0$ 
and $\infty$.  This realization has a natural generalization to the case of general Riemann
surfaces.  With a compact Riemann
surface $\Sigma$  and  a finite
subset $S$ of $\Sigma$  one can associate the  Krichever-Novikov type vector field Lie algebra   \cite{KN}  of those meromorphic vector fields on $\Sigma$ which are holomorphic
outside of $S$.   Genus zero case and $S=\{0, \infty\}$ corresponds to the first Witt algebra. 
 For the theory of  Krichever-Novikov Lie algebras  we refer to 
  \cite{Sc}.

For an arbitrary irreducible affine variety $X \subset \A_{\bf k}^n$ over an algebraically closed field $\bf k$ of characteristic $0$ the  Lie algebra $\DD$  of polynomial vector fields 
was studied in  \cite{J1}, \cite{J2}, \cite{Si} (see also \cite{BF3}).  It 
can be identified with the derivation algebra of the coordinate ring of $X$. This algebra
  is not simple in general. In fact, $\DD$  is simple if and only if $X$ is a smooth variety. 
In this paper we begin a systematic study of representations of the Lie algebras $\DD$
for  arbitrary smooth affine varieties $X$ aiming to generalize successful representation theory of Witt algebras.    Note that the general case is significantly more complicated since
 the standard tools of Lie theory, like Cartan subalgebras, root decompositions etc. do not apply in this case. Even in the case when $X$ is an affine elliptic curve, the Lie algebra
$\DD$  does not contain non-zero semisimple or nilpotent elements \cite{BF3}. 
 As a result it was not even clear how to approach the problem of classifying or at least constructing simple modules over $\DD$ for a general smooth variety $X$, since techniques of the classical representation theory of simple finite-dimensional Lie algebras can not be used. Representations of $\DD$ when  $X$ is
an affine space $\A_{\bf k}^n$ were studied by Rudakov \cite{Ru} but the classification problem of simple modules is still open (cf. \cite{LLZ}, \cite{PS}, \cite{CG}). 
  The case of a sphere  $X=\Sp^2$ was treated in \cite{BN}  (see also \cite{BF3}).  A new class of 
modules (\emph{tensor modules}) was constructed, these are modules of tensor fields on a sphere.
  The main idea of \cite{BN} which goes back to \cite{E2} suggests that as a first step one needs to study the category of representations for the Lie algebra $\DD$ which  admit a compatible action of the algebra $A_X$ of polynomial functions on the variety $X$. 
  This idea was successfully implemented in \cite{BF2} in the case of $n$-dimensional torus and led to the classification of simple weight modules with finite-dimensional weight spaces. 
  
  Developing  ideas of \cite{BN} we introduce a category $A\mathcal{V}$-Mod of $\DD$-modules with a compatible action of  $A_X$ for an arbitrary irreducible affine variety $X$. The main goal of the paper is the construction of two families of simple objects in $A\mathcal{V}$-Mod: \emph{gauge modules} and \emph{Rudakov modules}.  

   Rudakov modules $R_p (U)$ are generalizations of certain induced modules over the Lie algebra of derivations of a polynomial ring studied by Rudakov \cite{Ru}. 
These modules are associated with a point $p\in X$ and a finite-dimensional representation $U$ of the Lie algebra ${\fr L}_+$ of vector fields of non-negative degree on an affine space.  
   Modules studied by Rudakov in \cite{Ru} correspond to 
$X=\bb{A}^n$, $p=0$ and simple modules $U$.

Gauge modules are generalizations of  \emph{modules of tensor densities} (or simply \emph{tensor modules}) whose construction was inspired by non-abelian gauge theory. 
  Tensor modules were the key objects in the classification theory for ${\mathcal W}_n$ (modules of \emph{intermediate series} in the case of ${\mathcal W}_1$) \cite{BF2}.   
  The algebra of functions $A_X$, the algebra of vector fields $\DD$, and the space of $1$-forms are natural examples of tensor modules. 
Let $h$ be a standard minor of the Jacobian matrix of the defining ideal of $X$. Denote by  $A_{(h)}$ the localization of $A$ by $h$.  
  We define \emph{gauge} $A\D$-modules as submodules of $A_{(h)}\otimes U$ for each chart, where $U$ is  a finite-dimensional ${\fr L}_+$-module, and the action of 
$\DD$ involves gauge fields $\{ B_i \}$.
 We will say that such gauge modules are associated with $U$. 
The \emph{tensor modules} defined in~\cite{BN} are examples of gauge modules where the functions $B_i$ are all zero.  In particular, we obtain classical \emph{modules of tensor densities} when $X$ is the torus and $\mathcal{W}_n$ is the derivation algebra of Laurent polynomials.

We expect a family of gauge modules to be quite large as indicated in the following conjecture. 
 
 \begin{con}
Every $A\D_{X}$-module that is finitely generated over $A_X$ is a gauge module.
\end{con}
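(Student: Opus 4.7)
The plan is to proceed by localization followed by a jet-theoretic comparison with the gauge module construction. Let $M$ be a finitely generated $A_X$-module carrying a compatible $\D$-action. The first step is to exploit the compatibility: the $\D$-action endows $M$ with what is essentially a flat connection in the Grothendieck sense, so on a smooth chart of $X$ the underlying $A_X$-module structure should become locally free. Concretely, for a standard Jacobian minor $h$ whose non-vanishing locus is smooth, I would pass to the localization $A_{(h)}$, where $\D_{(h)} := A_{(h)} \otimes_{A_X} \D$ is free with basis $\partial_1, \ldots, \partial_n$ dual to a transcendence basis of coordinates. One would show that $M_{(h)} := A_{(h)} \otimes_{A_X} M$ is projective over $A_{(h)}$ and, after further localization by $h$ if necessary, free of some finite rank $N$.

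With a free basis in hand on the chart, I would read off the gauge fields: the action of each $\partial_i$ takes the form $\partial_i(f \otimes v) = \partial_i(f) \otimes v + f B_i v$ for matrices $B_i \in \mathrm{Mat}_N(A_{(h)})$, and the bracket relations $[\partial_i, \partial_j] = 0$ translate into the zero-curvature equations $\partial_i B_j - \partial_j B_i + [B_i, B_j] = 0$ built into the definition of a gauge module. The remaining ingredient is the finite-dimensional ${\fr L}_+$-module $U$ of which $M_{(h)}$ is to be a gauged twist. I would construct $U$ via the jet filtration at a point $p$ in the chart: the stabilizer of $p$ in $\D$ carries a natural filtration whose associated graded is isomorphic to ${\fr L}_+$, and successive jet data on $M$ at $p$ should endow the fiber $M / m_p M$ with an ${\fr L}_+$-module structure. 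A matching argument would then embed $M_{(h)}$ as a submodule of $A_{(h)} \otimes U$ with connection encoded by $\{B_i\}$, exhibiting $M$ as a gauge module.

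The principal obstacle is controlling the passage from the stabilizer filtration to a genuine ${\fr L}_+$-action on a finite-dimensional quotient: finite generation of $M$ over $A_X$ forces finite-dimensional fibers, but it does not automatically guarantee that the higher-order jet actions stabilize at a finite depth, which is exactly what is needed for $U$ to carry a well-defined finite-dimensional ${\fr L}_+$-module structure. A secondary difficulty is gluing: the conjecture asserts a gauge module structure on $M$ globally, while the construction above proceeds chart by chart, so one would need to verify that the data on overlapping charts patch in a way preserving the gauge structure. These issues, rather than the formal matching of the $\D$-action with connection data, are the real source of the statement being conjectural; one might reasonably expect partial progress first in the case where $X$ is smooth and admits a single Jacobian chart.
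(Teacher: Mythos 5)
This statement is stated in the paper as a \emph{conjecture}: the authors offer no proof of it, and the simplicity theorems they do prove (for Rudakov modules and for gauge modules) do not touch it. So there is no ``paper proof'' to compare against, and the relevant question is whether your proposal actually closes the problem. It does not, and you say as much yourself: what you have written is a strategy outline whose decisive steps are exactly the ones you flag as obstacles. The central gap is the construction of the finite-dimensional $\fr{L}_{+}$-module $U$. A gauge module is not merely a locally free $A$-module with a flat connection $\{B_i\}$; the defining action \eqref{gaction} of a general vector field $f\frac{\del}{\del t_i}$ involves the higher Taylor coefficients $\frac{\del^{k}f}{\del t^{k}}$ paired with $\rho\bigl(X^{k}\frac{\del}{\del X_i}\bigr)$, so one must show that the induced action of the stabilizer filtration on the fiber $M/\fr{m}_pM$ kills $\fr{L}(l)$ for some finite $l$ and that the full $\D$-action on $M_{(h)}$ is then reconstructible from this finite jet together with the $B_i$. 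Neither of these is established by finite generation of $M$ over $A$ alone, and no argument is supplied. The gluing issue you mention is comparatively minor (the paper's definition of gauge module is already chart-by-chart), but the jet-stabilization and matching steps are precisely why the statement is a conjecture rather than a theorem.

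Two smaller points. First, the local-freeness step is plausible on the smooth locus (a coherent module with a flat connection on a smooth variety is locally free), but the conjecture as stated concerns an arbitrary irreducible affine variety, where this can fail and where the standard atlas need not cover $X$; your proposal is silent on the singular case. Second, $\Der A_{(h)}$ is free of rank $s=\dim X$ on the chart parameters $\frac{\del}{\del t_1},\dots,\frac{\del}{\del t_s}$, not of rank $n$. As it stands, your text is a reasonable research plan for attacking the conjecture, and its identification of the hard points is accurate, but it is not a proof.
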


Our main result is the following

\
\

\noindent{\bf Main Theorem.}\label{thm-introd}
\emph{Let ${\bf k}$ be an algebraically closed field of characteristic $0$,  $X \subset \A_{\bf k}^n$  an irreducible affine variety of dimension $s$,  
 $U$  a finite-dimensional simple $\fr{gl}_{s}({\bf k})$-module.  Then
\begin{itemize}
\item The Rudakov module $R_{p}(U)$ is a simple $A\D$-module for any non-singular point $p\in X$.
\item If $X$ is smooth then any gauge $A\D$-module associated with $U$ is simple.
\end{itemize}
}
\
\

This result allows us to construct new families of simple $A\D$-modules. We note that the question of simplicity of restrictions of Rudakov and gauge modules to the Lie algebra 
$\DD$ remains open. We are going to address this question in a subsequent paper.

\subsection*{Acknowledgements}
The present paper is based on the work conducted during the visit of Y.B. and J.N. to University of S\~{a}o Paulo. 
This visit was partially supported through SPRINT grant funded by FAPESP (2016/50475-3) and by Carleton University. Y.B. and J.N. gratefully acknowledge the hospitality of the University of S\~{a}o Paulo. Y.B. acknowledges support from the Natural Sciences and Engineering Research Council of Canada. V.F. was
supported in part by  CNPq grant (301320/2013-6) and by  FAPESP grant (2014/09310-5).

\section*{Preliminaries}
\subsection*{Functions and vector fields on algebraic varieties}
Our general setup follows the papers~\cite{BF3} and~\cite{BN} where more details can be found. We reiterate the basics of the setup here.

Let $X\subset \bb{A}^{n}$ be an irreducible affine algebraic variety over an algebraically closed field ${\bf k}$ of characteristic zero, and let $I_X = \left< g_1, \ldots, g_m \right>$ be the ideal of all functions that vanish on $X$. 
Let $A_X:={\bf k}[x_1, \ldots, x_n] / I_X$ be the algebra of polynomial functions on $X$.
 Denote by $\D_X:=\Der_{\bf k}(A_X)$  the Lie algebra of polynomial vector fields on $X$. We shall often drop the subscripts and write just $A$ and $\D$ for $A_X$ and $\D_X$. Note that $\D$ is an $A$-module and that $A$ is a left $\D$-module.
We can give a more explicit description of the Lie algebra $\D$ using Lie algebra $W_n$ of vector fields on  $\bb{A}^{n}$, $W_n:=\Der({\bf k}[x_1,\ldots, x_n])$. 
It was shown in ~\cite{BF3} that there is an isomorphism of Lie algebras:
\[\D \simeq \{\mu \in W_n \; | \; \mu(I_X)\subset I_X\} / \{\mu \in W_n \; | \; \mu({\bf k}[x_1,\ldots,x_n])\subset I_X\}.\]

Alternatively, we can consider $\D$ as a subalgebra of $\bigoplus_{i=1}^n A\frac{\del}{\del x_i}$. If we define a matrix $J=(\frac{\del g_i}{\del x_j})_{i,j}$ and consider it as a map $J:A^{\oplus n} \ra A^{\oplus m}$ then $\sum_{i=1}^{n} f_i \frac{\del}{\del x_i} \in \D$ if and only if $(f_1, \ldots, f_n) \in \mathrm{Ker}\; J$ ~\cite{BF3}.

Let $r:=\rank_{F} J$ where $F$ is the field of fractions of $A$ and let 
$\{h_i\}$ be the nonzero $r\times r$-minors of $J$. Define charts $N(h_i):=\{ p\in X\; |\; h_i(p) \neq 0\}$. If $X$ is smooth, these charts cover $X$ and we call this set of charts the \emph{standard atlas} for $X$.

Recall from \cite{BN} that $t_1, \ldots, t_s \in A$ are called \emph{chart parameters} in the chart $N(h)$ if the following conditions hold:
\begin{itemize}
	\item $t_1,\ldots, t_s$ are algebraically independent over ${\bf k}$, so that ${\bf k}[t_1,\ldots, t_s] \subset A$.
	\item Every $f\in A$ is algebraic over ${\bf k}[t_1,\ldots, t_s]$.
	\item The derivation $\frac{\del}{\del t_i}$ of ${\bf k}[t_1,\ldots, t_s]$ extends uniquely to a derivation of the localized algebra $A_{(h)}$.
\end{itemize}
From these conditions it also follows that $s=\dim X$ and that
$$\Der(A_{(h)}) = \bigoplus_{i=1}^s A_{(h)}\frac{\del}{\del t_i},$$ see~\cite{BN} for details. Since $\D = \Der(A) \subset \Der(A_{(h)})$, each vector field $\eta$ has a unique representation $\eta = \sum_{i=1}^s f_i \frac{\del}{\del t_i}$ for some $f_i \in A_{(h)}$. The \emph{standard chart parameters} in $N(h_i)$ are chosen to be the variables $x_k$ such that the $k$-th column of $J$ is not part of the minor $h_i$. 

We recall the following result from ~\cite[Section~3]{BF3}.
\begin{lemma}
Let $t_1, \ldots, t_s$ be standard chart parameters in the chart $N(h)$. Then $h \frac{\del}{\del t_i} \in \D$ for all $i$.
\end{lemma}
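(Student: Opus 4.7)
The plan is to show that the derivation $h\frac{\partial}{\partial t_i}$ of $A_{(h)}$ preserves the subalgebra $A$, and therefore restricts to an element of $\mathcal{V} = \Der_{\bf k}(A)$. After reordering the defining equations $g_1,\ldots,g_m$ and the coordinates $x_1,\ldots,x_n$, I may assume that $h = \det(\partial g_l/\partial x_j)_{l,j=1,\ldots,r}$ and that the standard chart parameters are $t_k = x_{r+k}$ for $k=1,\ldots,s$. Since $A$ is generated as a ${\bf k}$-algebra by the images of $x_1,\ldots,x_n$, and since any Leibniz derivation is determined by its action on algebra generators, it suffices to verify that $h\frac{\partial}{\partial t_i}(x_j)\in A$ for each $j$.

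For the easy case $j = r+k$, we compute $h\frac{\partial}{\partial t_i}(x_{r+k}) = h\,\delta_{ik}\in A$. For the case $j=1,\ldots,r$, apply the derivation $\frac{\partial}{\partial t_i}$ of $A_{(h)}$ to the identities $g_l = 0$ (valid in $A$, hence in $A_{(h)}$) to obtain, for $l=1,\ldots,r$,
\[
\sum_{j=1}^{r}\frac{\partial g_l}{\partial x_j}\cdot\frac{\partial x_j}{\partial t_i}\;=\;-\frac{\partial g_l}{\partial x_{r+i}}.
\]
This is an $r\times r$ linear system in the unknowns $\frac{\partial x_j}{\partial t_i}$ whose coefficient matrix has determinant $h$, with all entries already in $A$, and whose right-hand side also has entries in $A$. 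Cramer's rule therefore expresses $h\cdot\frac{\partial x_j}{\partial t_i}$ as, up to sign, an $r\times r$ determinant of elements of $A$, and in particular as an element of $A$.

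Combining the two cases gives $h\frac{\partial}{\partial t_i}(A)\subset A$. Since $\Der_{\bf k}(A)$ embeds canonically into $\Der_{\bf k}(A_{(h)})$ via unique extension to the localization, this means $h\frac{\partial}{\partial t_i}$ comes from an element of $\mathcal{V}$, as required.

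The main technical point is ensuring the Cramer's rule output genuinely lies in $A$ and not merely in $A_{(h)}$; this is essentially automatic once one isolates the correct $r\times r$ subsystem, because both the coefficient matrix and the right-hand side have $A$-entries, and the factor of $h$ absorbs the only denominator that could appear. Note that I do not need the rows $l>r$ of $J$ to enter the argument at all: the very existence of $\frac{\partial}{\partial t_i}$ as a derivation of $A_{(h)}$ is part of the standing hypothesis on chart parameters, and consistency of the remaining equations follows from this.
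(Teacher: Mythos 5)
Your proof is correct. The paper itself gives no proof of this lemma (it is quoted from \cite[Section~3]{BF3}), and your argument --- differentiating the relations $g_l=0$ for the $r$ generators whose Jacobian rows form the minor, then using Cramer's rule to see that $h\cdot\frac{\partial x_j}{\partial t_i}$ is a cofactor-type determinant with entries in $A$ --- is the standard one and, to the best of my knowledge, essentially the argument given in \cite{BF3}. You correctly identify and dispose of the one delicate point: the solvability of the $r\times r$ subsystem is not something you need to establish, since the existence of $\frac{\partial}{\partial t_i}$ on $A_{(h)}$ is part of the definition of chart parameters, so Cramer's rule merely identifies the already-existing solution.
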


The following result was also stated in ~\cite{BF3}.
\begin{lemma}
\label{locpar}
Let $t_1, \ldots, t_s$ be standard chart parameters in the chart $N(h)$, and let $p\in N(h)$. Let $\overline{t}_{i}=t_i-t_i(p)$. Then $\overline{t}_1\ldots, \overline{t}_s$ are local parameters at $p$ in the classical sense of~\cite[Section~2.2.1]{S}.
\end{lemma}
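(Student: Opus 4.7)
The plan is to use the standard criterion (cf.\ \cite[Section~2.2.1]{S}): the elements $\ol t_1, \ldots, \ol t_s$ are local parameters at the nonsingular point $p$ iff they lie in $\fr m_p$ and their images form a basis of the cotangent space $\fr m_p/\fr m_p^2$, where $\fr m_p$ is the maximal ideal of the local ring $\ca O_{X,p}$. Since $h(p)\neq 0$, the localization $A_{(h)}$ embeds into $\ca O_{X,p}$, and clearly each $\ol t_i = t_i - t_i(p) \in \fr m_p$. Thus only the basis property requires proof.

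First I would confirm that $p$ is nonsingular and has cotangent space of dimension $s$. Because $h(p)\neq 0$ and $h$ is a nonzero $r\times r$ minor of $J$, we have $\rank J(p) = r$; for an irreducible variety $X$ the generic rank equals the codimension, so $r = n-s$. The Jacobian criterion then gives that $p$ is a smooth point and $\dim_{\bf k}\fr m_p/\fr m_p^2 = s$. It therefore suffices to show that the images of $\ol t_1,\ldots,\ol t_s$ span this quotient.

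Next I would exploit the very definition of \emph{standard} chart parameters. Put $\ol x_k := x_k - x_k(p)$. The elements $\ol x_1,\ldots, \ol x_n$ always span $\fr m_p/\fr m_p^2$, and a first-order Taylor expansion of each defining relation $g_j$ around $p$ yields
\[\sum_{k=1}^n \frac{\del g_j}{\del x_k}(p)\,\ol x_k \equiv 0 \pmod{\fr m_p^2}.\]
By the definition of the standard atlas, $\{t_1,\ldots,t_s\} = \{x_k : k\notin I\}$, where $I$ is the set of column indices and $J_0$ the set of row indices appearing in the minor $h$, with $|I|=|J_0|=r$. Restricting the above congruences to $j \in J_0$, the coefficient matrix acting on $\{\ol x_k\}_{k\in I}$ is exactly the submatrix of $J(p)$ whose determinant is $h(p)\neq 0$; hence it is invertible over ${\bf k}$. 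Solving, each $\ol x_k$ with $k\in I$ becomes a ${\bf k}$-linear combination of $\ol t_1,\ldots,\ol t_s$ modulo $\fr m_p^2$. Combined with the spanning by all $\ol x_k$, this shows that $\ol t_1,\ldots,\ol t_s$ span $\fr m_p/\fr m_p^2$, and since their number matches the dimension, they form a basis.

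The one bookkeeping step that needs care is transporting the Taylor relation into $\ca O_{X,p}$: one expands $g_j - g_j(p) = \sum_k \frac{\del g_j}{\del x_k}(p)\,\ol x_k + R_j$ in ${\bf k}[x_1,\ldots, x_n]$ with $R_j$ lying in the square of the ideal $(\ol x_1,\ldots,\ol x_n)$, then reduces modulo $I_X$ to transport the identity into $A$, and then into $\ca O_{X,p}$, where $R_j \in \fr m_p^2$. This is the only place where one must be careful about the passage between rings; everything else is linear algebra.
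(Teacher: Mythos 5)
Your proof is correct, but it proves the complementary half of the statement compared with the paper. Both arguments reduce to showing that the images of $\ol{t}_1,\ldots,\ol{t}_s$ form a basis of $\fr{m}_p/\fr{m}_p^2$, and both ultimately rely on $\dim_{\bf k}\fr{m}_p/\fr{m}_p^2=s$ (nonsingularity of $p$, which you make explicit and the paper leaves implicit in the phrase ``since $s=\dim X$''). From there the routes diverge: you prove \emph{spanning} by Taylor-expanding the defining equations $g_j$ at $p$ and inverting the $r\times r$ submatrix of $J(p)$ whose determinant is $h(p)\neq 0$, thereby expressing the remaining $\ol{x}_k$ in terms of the $\ol{t}_i$ modulo $\fr{m}_p^2$; the paper instead proves \emph{linear independence} by pairing a putative relation $\sum_i c_i\ol{t}_i\in\fr{m}_p^2$ against the derivations $h\frac{\del}{\del t_k}\in\D$ supplied by the preceding lemma, which act as a dual basis and force each $c_k=0$. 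The paper's argument is shorter and exploits the vector fields already constructed; yours is more self-contained (it does not use $h\frac{\del}{\del t_k}\in\D$) at the cost of the bookkeeping you flag when transporting the Taylor identity from ${\bf k}[x_1,\ldots,x_n]$ through $A$ into the local ring. One small point to tighten: from $h(p)\neq 0$ you only get $\rank J(p)\geq r$ directly; the equality $\rank J(p)=r$ also needs the observation that every $(r+1)\times(r+1)$ minor of $J$ vanishes identically on $X$ (it vanishes in the fraction field $F$, hence in the domain $A$), or alternatively the inequality $\rank J(p)\leq n-\dim X$ from the Jacobian criterion.
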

\begin{proof}
We need to show that $\{\ol{t}_1, \ldots, \ol{t}_s\}$ is a basis for $\fr{m}_{p} / \fr{m}_{p}^2$. Clearly, $\ol{t}_i \in \fr{m}_{p}$. Since $s=\dim X$, it suffices to prove linear independence. Suppose that $\sum_{i=1}^s c_i\ol{t}_i \in \fr{m}_{p}^2$ for some $c_i \in {\bf k}$. Then $d\big( \sum_{i=1}^s c_i\ol{t}_i \big) \in \fr{m}_{p}$ for all derivations $d \in \Der{A}$. Taking $d=h\frac{\del}{\del t_k}$ we get $h(p)c_k 1(p) = 0 \Leftrightarrow c_k=0$ for all $k$, which shows that the set $\{\ol{t}_1, \ldots, \ol{t}_s\}$ is linearly independent in $\fr{m}_{p} / \fr{m}_{p}^2$, and thus it is a basis.
\end{proof}

\subsection*{$A\D$-modules}
We shall study spaces $M$ equipped with module structures over both the commutative unital algebra $A$ and over the Lie algebra $\D$ such that the two actions are compatible in the following sense:
\[\eta \cdot (f\cdot m) = \eta(f) \cdot m + f \cdot (\eta \cdot m)\]
for all $\eta \in \D$, $f\in A$, and $m\in M$. Equivalently, $M$ is a module over the smash product $A\#\U(\D)$, see~\cite{Mo} for details. For brevity we define $A\D:=A\#\U(\D)$. The category of $A\D$-modules is equipped with a tensor product: for $A\D$-modules $M$ and $N$, the space $M\otimes_{A} N$ is also an $A\D$-module, where we have $\eta \cdot (m\otimes n) := \eta \cdot m \otimes n + m\otimes \eta \cdot n$ as usual, see~\cite[Section~2]{BN} for details.

The category of $A\D$-modules is also equipped with duals. First of all, for $M \in A\D$-Mod we define
\[M^{*} = \Hom_{\bf k}(M,{\bf k})\]
to be the full dual space. Here a function $f$ acts by $(f \cdot \varphi)(m):=\varphi(f \cdot m)$, and a vector field $\eta$ acts by
$(\eta \cdot \varphi)(m) = -\varphi(\eta \cdot m)$. These actions are compatible, so $M^{*}$ is an $A\D$-module. If $\D$ possesses a Cartan subalgebra $\fr{h}$, we may also consider the \emph{restricted dual} of $M$; this is the submodule $\ol{M}^{*}:= \displaystyle\bigoplus_{\lambda \in \mathfrak{h}^{*}} \Hom_{\bf k}(M_{\lambda},{\bf k})$ of $M^{*}$, where $M_{\lambda}$ is a weight subspace of $M$ of weight $\lambda$ with respect to $\fr{h}$. 

On the other hand, we also define \[M^{\circ}:= \Hom_{A}(M,A).\]
We equip this space with the natural $A$-action $(f\cdot \varphi)(m)=\varphi(f\cdot m)$, and we define the action of a vector field $\eta$ by
\[(\eta \cdot \varphi)(m):= - \varphi(\eta \cdot m) + \eta(\varphi(m)).\]
These actions are also compatible so $M^{\circ}$ is an $A\D$-module.

Duals and tensor products allow us to construct more $A\D$-modules. In particular, the module of 1-forms may be defined as $\Omega^1_X = \D_X^\circ$.

\begin{ex}
Let $X=\Sp^1$ be the circle. Here $A={\bf k}[t,t^{-1}]$ and $\D$ is spanned by $\{e_k\}_{k\in \bb{Z}}$ where $e_k=t^{k+1}\frac{\del}{\del t}$.
For each $\alpha \in {\bf k}$ we have an $A\D$-module $\fr{F}_{\alpha}$ spanned by $\{v_s\}_{s\in \bb{Z}}$ where the action is given by
\[t^k \cdot v_s = v_{s+k} \qquad \text{ and } \qquad e_k \cdot v_s = (s+\alpha k)v_{k+s}.\]

In this setting we get the following relation between the different duals:
\[\fr{F}_{\alpha}^{\circ} \simeq \fr{F}_{-\alpha} \qquad \text{ and } \qquad \ol{\fr{F}}_{\alpha}^{*} \simeq \fr{F}_{1-\alpha}.\]
\end{ex}

\subsection*{Filtration of $\D$}
Fix a standard chart $N(h)$ with chart parameters $t_1, \ldots, t_s$ and fix a point $p$ in this chart.
Write $\fr{m}_p$ for the maximal ideal in $A$ consisting of functions that vanish at $p$. For $l \geq -1$, define $\D(l):=\{\eta\in \D \; | \; \eta(A) \in \fr{m}_{p}^{l+1}\}$. Then we have a filtration of subalgebras
\[\D=\D(-1) \supset \D(0) \supset \D(1) \supset \cdots, \]
with $[\D(l),\D(k)] \subset \D(l+k)$ for $l+k \geq -1$. This also shows that for $l \geq 0$, $\D(l)$ is an ideal of $\D(0)$. To simplify notation we shall sometimes write $\D_{+}$ for $\D(0)$.

\begin{lemma}
\label{filt}
We have $\D(l) = \fr{m}_{p}^{l+1}\D$.
\end{lemma}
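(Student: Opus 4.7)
The plan is to establish the two inclusions separately. The containment $\fr{m}_p^{l+1}\D \subset \D(l)$ is immediate from the Leibniz rule: for $f \in \fr{m}_p^{l+1}$, $\mu \in \D$, and $g \in A$, one has $(f\mu)(g) = f\mu(g) \in \fr{m}_p^{l+1}$. Essentially all the work lies in the reverse inclusion.

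For $\D(l) \subset \fr{m}_p^{l+1}\D$, I would work in the chart $N(h)$. Given $\eta \in \D(l)$, the ambient decomposition $\Der(A_{(h)}) = \bigoplus_{i=1}^s A_{(h)}\frac{\del}{\del t_i}$ lets me write $\eta = \sum_{i=1}^s f_i \frac{\del}{\del t_i}$ with $f_i \in A_{(h)}$. Applying $\eta$ to the chart parameter $t_j \in A$ reads off $f_j = \eta(t_j) \in \fr{m}_p^{l+1}$, so in particular $f_j \in A$. Multiplying by $h$ and using the previous lemma, which guarantees $h\frac{\del}{\del t_i} \in \D$, gives
$$h\eta = \sum_{i=1}^s f_i \left(h\frac{\del}{\del t_i}\right) \in \fr{m}_p^{l+1}\D.$$

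The remaining step is to cancel the factor of $h$, and this is where I expect the main subtlety. Since $p \in N(h)$, we have $h \notin \fr{m}_p$; combined with the fact that $A/\fr{m}_p^{l+1}$ is local with nilpotent maximal ideal $\fr{m}_p/\fr{m}_p^{l+1}$, the class of $h$ is a unit modulo $\fr{m}_p^{l+1}$. Choosing $g \in A$ with $gh \equiv 1 \pmod{\fr{m}_p^{l+1}}$, I would write
$$\eta = g(h\eta) - (gh-1)\eta,$$
where the first term lies in $A \cdot \fr{m}_p^{l+1}\D = \fr{m}_p^{l+1}\D$ and the second in $\fr{m}_p^{l+1}\cdot\D$, which finishes the argument.

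The obstacle is conceptual rather than technical: the natural coordinate vectors $\frac{\del}{\del t_i}$ themselves are not in $\D$ (only the rescaled vectors $h\frac{\del}{\del t_i}$ are), so one cannot simply read off the $f_i$'s as genuine "coordinates" of $\eta$ inside $\D$. The workaround is to clear the denominator by multiplying by $h$ — landing inside $\fr{m}_p^{l+1}\D$ on the nose — and then exploit the fact that $h$ is invertible in the quotient ring $A/\fr{m}_p^{l+1}$ to recover $\eta$ itself without losing information.
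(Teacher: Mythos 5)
Your proof is correct and follows essentially the same route as the paper's: decompose $\eta$ in the chart, read off $f_i=\eta(t_i)\in\fr{m}_p^{l+1}$, use $h\frac{\del}{\del t_i}\in\D$ to get $h\eta\in\fr{m}_p^{l+1}\D$, and then cancel the factor of $h$ using $h(p)\neq 0$. The only cosmetic difference is in that last cancellation: you pick an explicit inverse $g$ of $h$ modulo $\fr{m}_p^{l+1}$, whereas the paper expands $(h-h(p))^{l+1}\eta\in\fr{m}_p^{l+1}\D$ to isolate the scalar multiple $h(p)^{l+1}\eta$ --- both steps encode the same fact that $h$ is a unit in $A/\fr{m}_p^{l+1}$.
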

\begin{proof}
It is clear that $\fr{m}_{p}^{l+1}\D \subset \D(l)$. For the reverse inclusion, take $\eta \in \D(l)$ and express it as $\eta = \sum_{i=1}^s f_i \frac{\del}{\del t_i}$. Then by the definition of $\D(l)$ we have $ h f_k = h\eta(t_k)\in \fr{m}_{p}^{l+1}$ for each $k$. Hence $f_k \in \fr{m}_{p}^{l+1}$ as $h(p) \neq 0$. Since $h\frac{\del}{\del t_i} \in \D$ we have $h\eta \in \fr{m}_{p}^{l+1}\D$.

But we also have $(h-h(p))^{l+1}\eta \in \fr{m}_{p}^{l+1}\D$. Expanding this and using the fact that $h\eta \in \fr{m}_{p}^{l+1}\D$ we also get $h(p)^{l+1}\eta \in \fr{m}_{p}^{l+1}\D$.
 Since $h(p)\neq 0$ we finally have $\eta \in \fr{m}_{p}^{l+1}\D$ which completes the proof.
\end{proof}

\section*{Rudakov modules}
Let $p$ be a non-singular point of $X$ and let $\{t_1, \ldots, t_s\}$ be the standard chart parameters centered at $p$, i.e. $t_1(p)=\cdots = t_s(p)=0$. In other words, given standard chart parameters $x_{j_1}, \ldots, x_{j_s}$, we take $t_i=x_{j_i}-x_{j_i}(p)$. 

Write $\fr{L}$ for the algebra of polynomial derivations, 
$$\fr{L}=\bigoplus_{i=1}^{s} {\bf k}[X_1,\ldots, X_s] \frac{\del}{\del X_i}.$$ If $Q$ is a monomial of degree $d$, we define the degree of the derivation $Q\frac{\del}{\del X_i}$ to be $d-1$. For $l \geq -1$, let $\fr{L}(l)$ the subalgebra of $\fr{L}$ consisting of derivations with no terms of degree less than $l$. We shall usually write $\fr{L}_{+}$ for $\fr{L}(0)$. This concept of degrees also extends to the Lie algebra
\[\hat{\fr{L}}:=\Der({\bf k}[[X_1,\ldots, X_s]]) = \bigoplus_{i=1}^{s} {\bf k}[[X_1,\ldots, X_s]] \frac{\del}{\del X_i},\]
and we have filtrations
\[\fr{L} \supset \fr{L}_{+} \supset \fr{L}(1)\supset \fr{L}(2) \supset \cdots\]
and
\[\hat{\fr{L}} \supset \hat{\fr{L}}_{+} \supset \hat{\fr{L}}(1)\supset \hat{\fr{L}}(2) \supset \cdots.\]

Consider the embedding 
$\D \subset \hat{\fr{L}}$ with $t_i \mapsto X_i$ discussed in~\cite[Section~3]{BF3}. It follows from Lemma~\ref{filt} that in this embedding we have
 $\D(l) = \hat{\fr{L}}(l) \cap \D$.

\begin{lemma}
\label{pow}
(a) For any element $\mu\in \fr{L}$ whose terms all have degree less than $N$,
there exists $\eta \in \D$ such that 
\[\eta = \mu + \text{ terms of degree }\geq N.\]

(b) There is an isomorphism of Lie algebras\[\D_{+} / \D(l) \simeq \fr{L}_{+} / \fr{L}(l).\]

(c) In particular, we have\[\D_{+} / \D(1) \simeq \fr{gl}_s({\bf k}).\]
\end{lemma}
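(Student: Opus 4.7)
The plan is to handle the three parts of the lemma in order, with part (a) being the main content and parts (b) and (c) following by formal manipulations with the filtration.

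For part (a), I would exploit the vector fields $h \frac{\del}{\del t_i} \in \D$ together with the $A$-module structure of $\D$. Under the embedding $\D \subset \hat{\fr{L}}$ via Taylor expansion at $p$, the derivation $h \frac{\del}{\del t_i}$ maps to $h(X) \frac{\del}{\del X_i}$, where $h(X) \in {\bf k}[[X_1, \ldots, X_s]]$ has constant term $h(p) \neq 0$ and is therefore invertible as a formal power series. More generally, for any $f \in A$ the element $f h \frac{\del}{\del t_i} \in \D$ maps to $f(X) h(X) \frac{\del}{\del X_i}$. Decomposing $\mu = \sum_i Q_i(X) \frac{\del}{\del X_i}$ with each $Q_i$ of polynomial degree at most $N$, the plan is to find $f_i \in A$ whose Taylor expansion at $p$ agrees with $Q_i(X) h(X)^{-1}$ modulo $(X_1, \ldots, X_s)^{N+1}$, so that $\eta := \sum_i f_i h \frac{\del}{\del t_i}$ lies in $\D$ and satisfies $\eta \equiv \mu \pmod{\hat{\fr{L}}(N)}$.

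The main obstacle here is verifying that the Taylor expansion map $A \to {\bf k}[X_1, \ldots, X_s]/(X_1, \ldots, X_s)^{N+1}$ is surjective, i.e., that the polynomial truncation of $Q_i(X) h(X)^{-1}$ really is realized as the Taylor expansion at $p$ of some $f_i \in A$. This is precisely where the non-singularity of $p$ is essential: the local ring $\mathcal{O}_{X,p}$ is regular of dimension $s$, so $A/\fr{m}_p^{N+1} \simeq \mathcal{O}_{X,p}/\fr{m}_p^{N+1} \simeq {\bf k}[X_1, \ldots, X_s]/(X_1, \ldots, X_s)^{N+1}$, which gives the required surjection.

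For part (b), the inclusion of polynomial derivations $\fr{L}_+ \hookrightarrow \hat{\fr{L}}_+$ descends to an isomorphism $\fr{L}_+ / \fr{L}(l) \simeq \hat{\fr{L}}_+ / \hat{\fr{L}}(l)$, since both sides are identified with derivations of vector field degree less than $l$. Using the identity $\D(l) = \D \cap \hat{\fr{L}}(l)$ recorded right before the statement of the lemma, the natural map $\D_+ \to \hat{\fr{L}}_+ / \hat{\fr{L}}(l)$ has kernel $\D_+ \cap \hat{\fr{L}}(l) = \D(l)$, yielding an injection of Lie algebras $\D_+ / \D(l) \hookrightarrow \hat{\fr{L}}_+ / \hat{\fr{L}}(l)$. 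Surjectivity is precisely the content of part (a) with $N = l$: any class has a polynomial representative $\mu \in \fr{L}_+$ of degree less than $l$, and part (a) lifts it to an element $\eta \in \D_+$ with $\eta - \mu \in \hat{\fr{L}}(l)$. Part (c) then follows immediately by setting $l = 1$: the quotient $\fr{L}_+ / \fr{L}(1)$ consists of linear vector fields $\sum_{i,j} a_{ij} X_i \frac{\del}{\del X_j}$, and the assignment $E_{ij} \mapsto X_i \frac{\del}{\del X_j}$ provides the standard Lie algebra isomorphism with $\fr{gl}_s({\bf k})$.
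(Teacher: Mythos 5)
Your proof is correct and follows essentially the same route as the paper: both construct the lift in part (a) by multiplying $h\frac{\del}{\del t_i}\in\D$ by an element of $A$ whose Taylor expansion truncates the power series $Q_i\,h^{-1}$ (the paper reduces to $\mu=\frac{\del}{\del X_i}$ and then multiplies by polynomials, which is the same computation), and both deduce (b) and (c) formally from (a). The only difference is in how surjectivity of the Taylor map is justified: where you invoke regularity of $\mathcal{O}_{X,p}$ at the non-singular point, the paper simply observes that the needed truncation is a polynomial in $X_1,\ldots,X_s$ and hence already lies in ${\bf k}[t_1,\ldots,t_s]\subset A$ by the definition of chart parameters.
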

\begin{proof}
For part $(a)$ it is sufficient to show that for every $N \in \bb{N}$ there exists $\eta \in \D$ such that
\[\eta = \frac{\del}{\del X_i} + \text{terms of degree } \geq N.\]
Then the claim of $(a)$ will follow since such vector fields may be multiplied by a polynomial in $X_1, \ldots, X_s$.

To construct $\eta$ we first take $h\frac{\del}{\del X_i} \in \D$. Since $h(p) \neq 0$, the power series for $h$ is invertible, and we can write
\[h^{-1} = q_N + \text{ terms of degree } > N,\]
with $q_N \in {\bf k}[X_1, \ldots, X_s] \subset A$. Then $\eta=q_Nh\frac{\del}{\del X_i}$ will have the desired form.

Part $(b)$ is an immediate consequence of $(a)$, and part $(c)$ follows from the fact that $\fr{L}_{+} / \fr{L}(1) \simeq \fr{gl}_s({\bf k})$. 
\end{proof}

Let $U$ be a finite-dimensional $\fr{L}_{+}$-module. By the discussion after ~\cite[Lemma~2]{Bi}, 
there exists $l \in \bb{N}$ such that $\fr{L}(l) U = (0)$, hence $U$ is an $\fr{L}_{+} / \fr{L}(l)$-module. The isomorphism of Lemma~\ref{pow} $(b)$ defines a $\D_{+}$-module structure on $U$ such that $\D(l)U=(0)$. We also define an $A$-action on $U$ by evaluation: $f \cdot u := f(p)u$ for $f\in A$ and $u\in U$. Note that $\fr{m}_{p}U = (0)$. For $\eta \in \D_{+}$ we have
\[\eta \cdot (f \cdot u) = f(p)\eta \cdot u =f \cdot (\eta \cdot u) + \eta(f) \cdot u,\]
 since $\eta(f) \in \fr{m}_{p}$. This shows that the two actions are compatible and that $U$ is in fact an $A\#\U(\D_{+})$-module.\\

The {\bf Rudakov module} $R_{p}(U)$ is defined as an induced module
\[R_{p}(U) := A\#\U(\D) \otimes_{A\#\U(\D_{+})} U.\]

\begin{rmk}
The special case when $X=\bb{A}^n$, $p=0$, and $U$ is a simple $\fr{gl}_n$-module was studied by Rudakov in~\cite{Ru}. The corresponding module $R_{0}(U)$ was shown to be simple as a $W_n$-module whenever $R_{0}(U)$ does not appear in the de Rham complex.
\end{rmk}

\begin{thm}
\label{rudakovsimple}
Let $U$ be a finite-dimensional simple $\D_{+} / \D(1)\simeq \fr{gl}_{s}({\bf k})$-module, and let $p$ be a non-singular point of $X$.
 Then the corresponding Rudakov module $R_{p}(U)$ is a simple $A\D_{X}$-module.
\end{thm}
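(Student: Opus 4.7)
The plan is to realize $R_p(U)$ as ${\bf k}[\xi_1,\ldots,\xi_s]\otimes U$ as a vector space, filter it by $\xi$-degree, and show that on the associated graded the action of $\fr{m}_p$ is essentially $t_i \mapsto -\del/\del\xi_i$; this will force any nonzero submodule to meet $1\otimes U$, which is $\fr{gl}_s$-simple by hypothesis and generates all of $R_p(U)$. To set this up, I would apply Lemma~\ref{pow}(a) to choose lifts $\xi_1,\ldots,\xi_s\in\D$ of $\del/\del X_1,\ldots,\del/\del X_s$ under the embedding $\D\subset\hat{\fr{L}}$. Then $V:=\mathrm{span}(\xi_1,\ldots,\xi_s)$ is a vector-space complement to $\D_+$ in $\D$, and PBW (ordering the $\xi_i$ before a basis of $\D_+$) identifies $\U(\D)$ as a free right $\U(\D_+)$-module with basis $\{\xi^a\}_{a\in\bb{N}^s}$. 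Combined with the smash-product relation $f\eta=\eta f-\eta(f)$, one obtains a vector-space decomposition $R_p(U)=\bigoplus_{a\in\bb{N}^s}\xi^a\otimes U$; I would then set $R_p(U)^{\le n}:=\bigoplus_{|a|\le n}\xi^a\otimes U$ and consider the associated graded $\mathrm{gr}\,R_p(U)=\bigoplus_n S^n(V)\otimes U$.

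The heart of the proof, and its main technical obstacle, is understanding the action of $A$ on this filtration. For $f\in A$ and $\xi^a\otimes u\in R_p(U)$, iterating $f\xi_j=\xi_j f-D_j(f)$ (with $D_j$ denoting the derivation $\xi_j$ on $A$) commutes $f$ to the right past $\xi^a$, after which every $A$-factor that reaches the tensor acts on $u$ by evaluation at $p$. The bookkeeping of iterated commutators needs care, but modulo lower-degree terms this yields $f\cdot(\xi^a\otimes u)\equiv f(p)(\xi^a\otimes u) \pmod{R_p(U)^{\le|a|-1}}$; in particular for $f\in\fr{m}_p$ the leading term vanishes, and the induced map $S^n(V)\otimes U\to S^{n-1}(V)\otimes U$ on the associated graded factors through $\fr{m}_p/\fr{m}_p^2$. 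By Lemma~\ref{locpar}, the classes $\bar t_1,\ldots,\bar t_s$ form a basis of $\fr{m}_p/\fr{m}_p^2$, and a direct computation using $D_j(t_i)(p)=\delta_{ij}$ identifies the action of $t_i$ on $\mathrm{gr}\,R_p(U)$ with $-\del/\del\xi_i\otimes\mathrm{id}_U$ on $S(V)\otimes U$.

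Given this, simplicity follows readily. Let $0\neq M\subseteq R_p(U)$ be a submodule and let $n\ge 0$ be minimal with $M\cap R_p(U)^{\le n}\neq 0$. If $n\ge 1$, pick $m\in M\cap R_p(U)^{\le n}$ with nonzero leading term $\bar m\in S^n(V)\otimes U$; since $\bar m$ is a nonzero polynomial in $\xi$ of positive degree, some $\del\bar m/\del\xi_i$ is nonzero, so $t_i\cdot m\in M$ has nonzero image in $S^{n-1}(V)\otimes U$, contradicting minimality. Hence $n=0$ and $M\cap(1\otimes U)\neq 0$. Because $\D_+$ acts on $1\otimes U$ through $\D_+/\D(1)\cong\fr{gl}_s({\bf k})$ and $U$ is $\fr{gl}_s$-simple by assumption, $1\otimes U\subseteq M$; since $R_p(U)$ is generated over $A\#\U(\D)$ by $1\otimes U$, we conclude $M=R_p(U)$.
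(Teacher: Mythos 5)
Your proposal is correct and follows essentially the same route as the paper: the paper's Proposition~\ref{reduction} likewise uses Lemma~\ref{pow}(a) to write any vector as a polynomial in lifted derivations applied to $1\otimes U$ and shows that $t_k$ acts as $-\partial/\partial\eta_k$ there, so that multiplication by functions drives any nonzero vector into $1\otimes U$, after which $\fr{gl}_s$-simplicity of $U$ finishes the argument. The only cosmetic difference is that you fix the lifts $\xi_i$ once and pass to the associated graded, whereas the paper chooses the lifts $\eta_i$ adapted to the degree of the given vector so as to get an exact (rather than leading-order) formula.
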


To prove this theorem we need some  preliminary results.

We define a chain of subspaces in the Rudakov module by
$R_0 := 1\otimes U$ and $R_{i+1}:=R_i + \D \cdot R_i$. We also let $R_i:=(0)$ for $i<0$.
This gives a filtration
\[R_0 \subset R_1 \subset R_{2} \subset \cdots \quad \text{with} \quad  \cup_{i=0}^{\infty} R_i = R_{p}(U).\]
\begin{lemma}
\label{rels}
We have
\begin{enumerate}[label=(\alph*)]
	\item $\fr{m}_p R_l \subset R_{l-1}$.
	\item $\D(j)R_l \subset R_{l-j}$ for all $j$. 
\end{enumerate}
\end{lemma}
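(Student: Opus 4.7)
My plan is to prove both parts by induction on $l$, relying on the identity $\D(j) = \fr{m}_p^{j+1}\D$ from Lemma \ref{filt} to convert filtration statements about vector fields into filtration statements about their scalar coefficients.

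For part (a), I would run the induction simultaneously with the auxiliary statement that $A \cdot R_l \subset R_l$. The two claims are genuinely coupled because the compatibility identity rewrites $f \cdot (\eta \cdot r)$ as $\eta \cdot (f \cdot r) - \eta(f) \cdot r$, producing the term $\eta(f) \cdot r$ in which $\eta(f)$ is only known to lie in $A$, not in $\fr{m}_p$. The base case $l = 0$ is immediate: since the tensor product defining $R_p(U)$ is taken over $A \# \U(\D_+)$, any $f \in A$ slides across the tensor to act on $U$ by evaluation, giving $f \cdot (1 \otimes u) = 1 \otimes f(p)u = f(p)(1 \otimes u)$, which lies in $R_0$ and is zero when $f \in \fr{m}_p$. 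For the inductive step, write a generic element of $R_l$ as $r = r_1 + \eta \cdot r_2$ with $r_1, r_2 \in R_{l-1}$ and $\eta \in \D$, apply the compatibility identity, and invoke the inductive hypothesis on both $r_1$ and $r_2$; the three resulting terms fall respectively into $R_{l-1}$, $\D \cdot R_{l-2} \subset R_{l-1}$, and $A \cdot R_{l-1} \subset R_{l-1}$, which simultaneously closes the induction for both $\fr{m}_p R_l \subset R_{l-1}$ and $A R_l \subset R_l$.

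For part (b), the remaining work is formal once (a) is in hand. Since $\D \cdot R_l \subset R_{l+1}$ by the very definition of the filtration, for $j \geq 0$ we have
\[\D(j) R_l \;=\; \fr{m}_p^{j+1} \D \cdot R_l \;\subset\; \fr{m}_p^{j+1} R_{l+1} \;\subset\; R_{l-j},\]
where the last inclusion is obtained by applying part (a) exactly $j+1$ times. The case $j=-1$ just restates the definition $R_{l+1} = R_l + \D \cdot R_l$.

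The main obstacle, small but essential, is recognizing that part (a) cannot be proved in isolation: one genuinely needs the simultaneous $A$-stability of each $R_l$, otherwise the inductive step has no way to absorb the term $\eta(f) \cdot r_2$ arising from the commutation of $f$ past $\eta$. Once this piece of bookkeeping is in place, no further subtlety arises and part (b) reduces mechanically to iterating (a).
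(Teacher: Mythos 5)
Your argument for part (a) is correct and is essentially the paper's: the paper runs the same induction, writing $\fr{m}_p R_{l+1}\subset \fr{m}_p R_l+\D\fr{m}_p R_l+[\fr{m}_p,\D]R_l$ and absorbing the commutator term via $AR_l=({\bf k}\oplus\fr{m}_p)R_l\subset R_l$. Your ``auxiliary statement'' $AR_l\subset R_l$ is just this observation made explicit; since $A={\bf k}\oplus\fr{m}_p$, it is an immediate consequence of (a) at the same level $l$ rather than a genuinely independent clause of the induction.

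Part (b), however, has a real gap. The step
\[
\D(j)R_l=\fr{m}_p^{j+1}\D\cdot R_l\subset \fr{m}_p^{j+1}\bigl(\D\cdot R_l\bigr)\subset \fr{m}_p^{j+1}R_{l+1}
\]
silently identifies the action of the single vector field $f\eta$ (with $f\in\fr{m}_p^{j+1}$, $\eta\in\D$) with the composite operator ``apply $\eta$, then multiply by $f$''. These are different operators on an $A\D$-module: in the smash product $A\#\U(\D)$ the elements $1\#(f\eta)$ and $f\#\eta$ are distinct and satisfy no relation, and the compatibility axiom $\eta\cdot(f\cdot m)=\eta(f)\cdot m+f\cdot(\eta\cdot m)$ says nothing about how the vector field $f\eta$ acts. (Compare the gauge-module action later in the paper, where $(f\tfrac{\del}{\del t_i})\cdot(g\otimes u)$ differs from $f\cdot(\tfrac{\del}{\del t_i}\cdot(g\otimes u))$ by terms involving higher derivatives of $f$.) A concrete symptom that your reduction cannot work: it uses no information about how $\D_{+}$ acts on $U$, yet (b) at $l=0$, $j=1$ asserts $\D(1)\cdot(1\otimes U)=0$, which in the induced module equals $1\otimes\D(1)U$ and is nonzero for a general finite-dimensional $\fr{L}_{+}$-module on which $\fr{L}(1)$ acts nontrivially. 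So (b) is not a formal consequence of (a); it genuinely needs $\D(1)U=0$. The paper instead proves (b) by a second induction on $l$: the base case uses $\D(1)U=0$ and $\D(0)U\subset U$, and the inductive step writes $\D(j)\D R_l\subset\D\D(j)R_l+[\D(j),\D]R_l$ and invokes $[\D(j),\D]\subset\D(j-1)$. You should replace your part (b) with an argument of this kind.
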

\begin{proof}
We proceed to prove these claims by induction on $l$. We first prove $(a)$. For $l=0$, claim $(a)$ obviously holds. For the inductive steps we note that
\[\fr{m}_p R_{l+1} \subset \fr{m}_pR_{l} + \D\fr{m}_{p}R_l+[\fr{m}_{p},\D]R_l.\]
Here the two first terms on the right side lie in $R_l$, and since $[\eta, f] = \eta(f)$ in the algebra $A\#\U(\D)$, the third term also lies in $AR_l = ({\bf k} \oplus \fr{m}_p)R_l = R_l$. Thus claim $(a)$ holds by induction.

For claim $(b)$ we first consider the base case $l=0$. Since $\D(1)U=0$ and $\D(0)U\subset U$, the base case is trivially true for $j\geq 0$. For $j=-1$, the base case holds by definition of the sequence $R_l$.

For the induction step we assume that for a fixed $l$ and for all $j\geq -1$ we have $\D(j)R_l \subset R_{l-j}$, and we compute
\[\D(j) R_{l+1} \subset \D(j)R_{l}+\D(j)\D R_l \subset \D(j)R_{l}+\D\D(j) R_l + [\D(j),\D]R_l.\]
\[\subset \D(j)R_{l}+\D R_{l-j} + \D(j-1)R_l \subset R_{(l+1)-j},\]
and claim $(b)$ also follows by induction.
\end{proof}

\begin{cor}
\label{locnil}
Both $\fr{m}_{p}$ and $\D(1)$ act locally nilpotent on $R_{p}(U)$.
\end{cor}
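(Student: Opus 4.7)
The plan is to extract both local-nilpotency statements directly from Lemma~\ref{rels}, exploiting that the filtration $R_0 \subset R_1 \subset \cdots$ exhausts $R_p(U)$ together with the convention $R_{-1} = (0)$ fixed just above that lemma. Essentially no new work is required: the corollary is meant to package the two parts of Lemma~\ref{rels} into a form convenient for the simplicity argument that follows.

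First I would fix an arbitrary $v \in R_p(U)$ and, using $R_p(U) = \bigcup_l R_l$, pick $l$ with $v \in R_l$. For the $\fr{m}_p$-claim I would iterate part~(a): each successive application of an element of $\fr{m}_p$ drops the filtration index by one, so
\[\fr{m}_p^{l+1} v \;\subset\; \fr{m}_p^{l+1} R_l \;\subset\; R_{-1} \;=\; (0).\]
Hence every $v \in R_p(U)$ is annihilated by a sufficiently high power of $\fr{m}_p$, which is the definition of local nilpotency of the $\fr{m}_p$-action.

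The $\D(1)$-claim follows by exactly the same template, this time invoking part~(b) of Lemma~\ref{rels} with $j = 1$. Iterating the inclusion $\D(1) R_k \subset R_{k-1}$ a total of $l+1$ times yields $\D(1)^{l+1} v \subset R_{-1} = (0)$, so any composition of $l+1$ elements of $\D(1)$ kills $v$.

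I do not expect any genuine obstacle: the substantive induction is already carried out in Lemma~\ref{rels}, and the corollary is simply the downstream consequence of combining that lemma with the defining property that $R_0 = 1 \otimes U$ is annihilated by both $\fr{m}_p$ and $\D(1)$. The only bookkeeping point to keep in mind when writing it up is the convention $R_{-1} = (0)$, without which the telescoping of the filtration index stops one step too early.
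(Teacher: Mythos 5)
Your proof is correct and matches the paper's intent exactly: the paper leaves this corollary without proof precisely because it follows by iterating Lemma~\ref{rels}(a) and (b) (with $j=1$) down to $R_{-1}=(0)$, which is what you do.
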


\begin{rmk}
It follows from the previous Corollary that for any $v \in  R_{p}(U)$ the space $Av$ is finite-dimensional. Hence the Rudakov module is not finitely generated as an $A$-module. 
\end{rmk}

\begin{prop}
\label{reduction}
Let $U$ be a finite-dimensional $\fr{L}_{+}$-module and let $R_{p}(U)$ be the corresponding Rudakov module. For any nonzero $v\in R_p(U)$ we have $Av \cap (1\otimes U) \neq (0)$.
\end{prop}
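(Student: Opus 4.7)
The plan is to induct on the smallest integer $l$ with $v \in R_l$. The base case $l=0$ is immediate since $R_0 = 1 \otimes U$. For the inductive step it suffices to produce $f \in \fr{m}_p$ with $fv \neq 0$: then $fv \in R_{l-1}$ by Lemma~\ref{rels}(a), so the inductive hypothesis applied to $fv$ yields $g \in A$ with $0 \neq g(fv) = (gf)v \in Av \cap (1\otimes U)$. The substance of the proof is therefore to show that $\fr{m}_p$ does not annihilate any nonzero element of $R_l/R_{l-1}$ when $l \geq 1$.

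I would first set up a PBW-type description of $R_p(U)$. Using Lemma~\ref{pow}(a), choose $\eta_1, \ldots, \eta_s \in \D$ with $\eta_i \equiv \frac{\del}{\del t_i} \pmod{\D(1)}$; then their cosets form a basis of $\D/\D_{+}$. Combining the ordinary PBW theorem for $\U(\D)$ with respect to the decomposition $\D = \mathrm{span}(\eta_1,\ldots,\eta_s) \oplus \D_{+}$ with the observation that elements of $A$ commute past the $\eta_i$ at the cost of terms with strictly fewer $\eta$-factors, one sees that $A\#\U(\D)$ is free as a right $A\#\U(\D_{+})$-module on the monomials $\eta^{a} := \eta_1^{a_1}\cdots\eta_s^{a_s}$. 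Since $\fr{m}_p U = 0$, this yields a ${\bf k}$-basis of $R_p(U)$ consisting of the vectors $\eta^a \otimes u_k$ for $a \in \bb{Z}_{\geq 0}^s$ and $\{u_k\}$ a basis of $U$; moreover $R_l$ is precisely the span of those with $|a| \leq l$.

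Next I would compute the action of a coordinate $t_j$ via $[t_j, \eta_i] = -\eta_i(t_j) = -\delta_{ij} - r_{ij}$, where $r_{ij} \in \fr{m}_p^2$ because $\eta_i - \frac{\del}{\del t_i} \in \D(1)$ sends $A$ into $\fr{m}_p^2$ by Lemma~\ref{filt}. Using $r_{ij}R_l \subset R_{l-2}$ from Lemma~\ref{rels}(a) together with $t_j \cdot (1\otimes u) = t_j(p)(1\otimes u) = 0$, and propagating the commutator through $\eta^a$, one obtains
\[ t_j \cdot (\eta^a \otimes u) \equiv -a_j\, \eta^{a-e_j}\otimes u \pmod{R_{|a|-2}}, \]
where $e_j$ is the $j$-th standard unit vector. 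Thus if $v \in R_l \setminus R_{l-1}$ with $l \geq 1$ and $v \equiv \sum_{|a|=l} \eta^a \otimes u_a \pmod{R_{l-1}}$ with some $u_{a^\star}\neq 0$, then choosing $j$ with $a_j^{\star} > 0$ and invoking the linear independence of the PBW basis shows $t_j v \not\equiv 0 \pmod{R_{l-2}}$. In particular $t_j v$ is a nonzero element of $R_{l-1}$, and the induction closes.

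The main obstacle I expect is the PBW-type freeness of $A\#\U(\D)$ over $A\#\U(\D_+)$ asserted above. This is a standard but somewhat technical reduction, combining the ordinary PBW theorem for $\U(\D)$ with the smash product relation $f\eta - \eta f = -\eta(f)$ for $\eta \in \D$, $f \in A$, which allows one to rewrite any element in the normal form $\sum_a \eta^a s_a$ with $s_a \in A\#\U(\D_{+})$; uniqueness of this expression then follows by examining the highest $|a|$ appearing and applying ordinary PBW for $\U(\D)$ to the leading piece.
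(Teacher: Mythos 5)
Your argument is correct, but it takes a genuinely different route from the paper's. The paper's proof is non-inductive: for a given $v\in R_l$ it uses Lemma~\ref{pow}(a) to choose $\eta_i = \frac{\del}{\del X_i} + (\text{terms of degree}\geq l)$ --- adapted to $l$, not merely to degree $1$ as in your construction --- writes $v=\sum_i P_i(\eta_1,\ldots,\eta_s)u_i$ with $\deg P_i\leq l$ using only the \emph{spanning} property of the monomials $\eta^a$, observes that $t_k$ then acts as $-\frac{\del}{\del \eta_k}$ on such expressions (the error term of $[t_k,\eta_i]$ now lies in $\fr{m}_p^{l+1}$, which annihilates all of $R_l$), and finally applies the full monomial $t_1^{r_1}\cdots t_s^{r_s}$ for a top-degree monomial $\eta^r$ occurring in some $P_i$. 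That lands directly in $1\otimes U$, where nonvanishing is read off against the given basis $\{u_i\}$ of $U$, so no linear independence of the vectors $\eta^a\otimes u_i$ is ever needed. Your inductive argument, by contrast, must certify that $t_jv$ is nonzero inside $R_{l-1}/R_{l-2}$, and for that you genuinely need the PBW-type statement that $\{\eta^a\otimes u_k\}$ is a ${\bf k}$-basis of $R_p(U)$ with $R_l$ spanned by the $|a|\leq l$ part, i.e.\ that $A\#\U(\D)$ is free as a right $A\#\U(\D_{+})$-module on the $\eta^a$. That statement is true (and of independent interest), but it is precisely the technical load the paper's proof is engineered to avoid; moreover your uniqueness sketch ``examine the highest $|a|$'' should really be ``pass to the associated graded $A\otimes S(\D)$ of the filtration by $\U(\D)$-degree,'' since a term with small $|a|$ but a high-degree factor from $\U(\D_{+})$ is not controlled by $|a|$ alone. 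In short: your route buys a stronger structural result about $R_p(U)$ at the price of a freeness lemma, while the paper's choice of $\eta_i$ adapted to $l$ and its one-shot application of $t^r$ make the proposition follow from spanning alone.
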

\begin{proof}
Pick $l$ such that $v \in R_{l}$. Consider the following elements of $\D$, given by Lemma~\ref{pow} $(a)$:
\[\eta_i = \frac{\del}{\del X_i} + \text{ terms of degree } \geq l.\]
Then $\{\eta_1, \ldots, \eta_s\}$ is a basis of the space $\D / \D_{+}$. Note that for any $w \in R_{l-2}$ we have
$\eta_i \eta_j w = \eta_j \eta_i w$ since $[\eta_i,\eta_j]w \in \D(l-1)R_{l-2} \subset R_{-1}=(0)$. Using the Poincar\'e-Birkhoff-Witt theorem and this commutativity relation we may write
\[v = \sum_{i=1}^{\dim U} P_i(\eta_1, \ldots, \eta_s)u_i,\]
where $P_i(\eta_1, \ldots, \eta_s)$ are polynomials of degree $\leq l$, and $\{u_i\}$ is a basis of $U$.

We claim that 
\[t_k \cdot v = -\sum_{i=1}^{\dim U} \big(\frac{\del}{\del \eta_k}P_i(\eta_1, \ldots, \eta_s)\big)u_i.\]
Indeed, $t_ku_i=0$ and $[t_k,\eta_i]=-\eta_i(t_k) = -\delta_{i,k} + a$ where $a\in \fr{m}_p^{l+1}$ and by Lemma~\ref{rels} $(a)$ we have $aR_l = 0$.

Now choose a polynomial $P$ among $\{P_i\}$ with maximal degree $d$, and let $\eta_1^{r_1}\cdots \eta_s^{r_s}$ be a monomial occurring in $P$ with nonzero coefficient and with $\sum r_i = d$.  Then the above discussion shows that $Av$ contains the nonzero element
\[t_1^{r_1} \cdots t_s^{r_s}\cdot v \in 1\otimes U.\]
This completes the proof. 
\end{proof}

We are now ready to prove the main theorem of this section.
\subsection*{Proof of Theorem \ref{rudakovsimple}}
To establish the simplicity of $R_p(U)$ we need to show that every non-zero vector $v\in R_{p}(U)$ generates $R_{p}(U)$ as an $A\D$-module.
By Proposition~\ref{reduction}, the $A$-submodule generated by $v$ contains a nonzero vector $u \in U$. Since $U$ is a simple $\D_{+}$-module, the $A\D$-submodule generated by $v$ contains $1\otimes U$. By construction of the Rudakov module $1\otimes U$ generates $R_p(U)$. This completes the proof of the theorem.

\section*{Gauge modules}
We use notation of the previous section.
Let $h$ be a standard minor in the Jacobian matrix $J$, 
 and let $(U,\rho)$ be a finite-dimensional $\fr{L}_{+}$-module.

\begin{defi} Functions $B_i: \ A_{(h)} \otimes U\ra A_{(h)} \otimes U$, $1\leq i \leq s$,
are called {\it gauge fields} if

(i) each $B_i$ is $A_{(h)}$-linear,

(ii) $[B_i,\rho(\fr{L}_{+})]=0$,

(iii) $[\frac{\del}{\del t_i}+B_i,\frac{\del}{\del t_j}+B_j]=0$ as operators on $A_{(h)}\otimes U$ for all $1\leq i,j \leq s$.
\end{defi}

\begin{lemma}
Let $(U,\rho)$ be a finite-dimensional $\fr{L}_{+}$-module and let $\{ B_i \}$ be gauge fields, $1\leq i \leq s$. Then the space $A_{(h)} \otimes U$ is an $A_{(h)} \Der A_{(h)}$-module with the following action of 
$\Der A_{(h)} = \bigoplus_{i=1}^{s}A_{(h)}\frac{\del}{\del t_{i}}$:
\begin{equation}
\label{gaction}
\tag{1} \left( f \frac{\del}{\del t_i} \right) \cdot (g\otimes u)=
f \frac{\del g}{\del t_i}\otimes u + g f B_i(1\otimes u)  
 +\sum_{k\in \bb{Z}_{+}^{s} \setminus \{0\}}\frac{1}{k!}g \frac{\del^{k} f}{\del t^k}\otimes \rho \left( X^k \frac{\del}{\del X_i} \right) u,
\end{equation}
where $g \in A_{(h)}$, $u\in U$, and $k!=\prod_{i=1}^s k_i!$ for $k = (k_1,\ldots, k_s)$. Note that the sum on the right is finite.
\end{lemma}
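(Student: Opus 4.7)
The plan is to verify directly that formula~(1) defines an $A_{(h)}\Der A_{(h)}$-module structure on $M := A_{(h)}\otimes U$. Denote by $T(\eta) \in \mathrm{End}_{\bf k}(M)$ the operator that formula~(1) assigns to $\eta \in \Der A_{(h)}$. Three things must be checked: (a) $T(\eta)$ is a well-defined endomorphism, i.e.\ the sum on the right is finite, (b) the map $\eta\mapsto T(\eta)$ is a Lie algebra homomorphism, and (c) compatibility with the $A_{(h)}$-action, meaning $T(\eta)(f\cdot v) = \eta(f)\cdot v + f\cdot T(\eta)(v)$. Point~(a) is immediate since finite-dimensionality of $U$ forces $\fr{L}(N)U=0$ for some $N$, so $\rho\big(X^k \frac{\del}{\del X_i}\big)u=0$ whenever $|k|\geq N$. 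Point~(c) reduces, by ${\bf k}$-linearity, to a check on generators $v=g\otimes u$ using the higher Leibniz rule $\del^k(fg) = \sum_{l\leq k}\binom{k}{l}\del^l f\,\del^{k-l} g$ together with the $A_{(h)}$-linearity of $B_i$ from condition~(i).

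The heart of the argument, and the main obstacle, is the Lie bracket relation in~(b). My approach is to decompose $T(\eta) = T_0(\eta) + \ca{B}(\eta)$, where $T_0(\eta)$ is the operator produced by formula~(1) with all $B_i$ set to zero and $\ca{B}\big(\sum_i f_i \frac{\del}{\del t_i}\big) := \sum_i f_i B_i$. The summand $T_0$ is the tensor module action of~\cite{BN}, and $\ca{B}(\eta)$ is $A_{(h)}$-linear for each $\eta$, so adding it preserves~(c) automatically. Expanding $[T(\eta_1), T(\eta_2)]$ along this decomposition produces four pieces,
\[
[T_0(\eta_1), T_0(\eta_2)] + [T_0(\eta_1), \ca{B}(\eta_2)] - [T_0(\eta_2), \ca{B}(\eta_1)] + [\ca{B}(\eta_1), \ca{B}(\eta_2)],
\]
which must match $T_0([\eta_1, \eta_2]) + \ca{B}([\eta_1,\eta_2])$.

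The first summand equals $T_0([\eta_1, \eta_2])$ by the tensor-module verification, which reduces to repeated Leibniz-rule applications combined with the bracket relation $\big[X^k \frac{\del}{\del X_i},\, X^l\frac{\del}{\del X_j}\big] = k_j X^{k+l-e_j}\frac{\del}{\del X_i} - l_i X^{k+l-e_i}\frac{\del}{\del X_j}$ in $\fr{L}$. For the remaining three pieces the key observation is condition~(ii): $B_j$ commutes with every $\rho\big(X^k\frac{\del}{\del X_i}\big)$ for $k\neq 0$, so all higher-order $\rho$-terms in $T_0(\eta_1)$ commute with $\ca{B}(\eta_2)$. This makes $[T_0(\eta_1), \ca{B}(\eta_2)]$ collapse to the commutator of the ``zeroth-order'' differentiation on $A_{(h)}$ with the $A_{(h)}$-linear operator $f_2 B_j$, giving $f_1\frac{\del f_2}{\del t_i} B_j + f_1 f_2 \big[\frac{\del}{\del t_i}, B_j\big]$ on $g\otimes u$. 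Summing this (and its antisymmetric analogue for $\eta_2$) with the curvature term $[\ca{B}(\eta_1), \ca{B}(\eta_2)] = f_1 f_2 [B_i, B_j]$ and comparing with $\ca{B}([\eta_1,\eta_2])$ via $[f_1\frac{\del}{\del t_i}, f_2\frac{\del}{\del t_j}] = f_1\frac{\del f_2}{\del t_i}\frac{\del}{\del t_j} - f_2\frac{\del f_1}{\del t_j}\frac{\del}{\del t_i}$, the remaining discrepancy collapses to $f_1 f_2\big(\big[\frac{\del}{\del t_i}, B_j\big]-\big[\frac{\del}{\del t_j}, B_i\big]+[B_i, B_j]\big)$, which vanishes by condition~(iii).

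The hardest step is the tensor-part identity $[T_0(\eta_1), T_0(\eta_2)] = T_0([\eta_1,\eta_2])$: while conceptually routine, it involves a combinatorial bookkeeping of Leibniz coefficients against the structure constants of $\fr{L}$. This is essentially the calculation underlying the tensor module construction of~\cite{BN}, which I would either adapt or reproduce by induction on the smallest $N$ with $\fr{L}(N)U = 0$, so that $\rho$ factors through the finite-dimensional quotient $\fr{L}_+/\fr{L}(N)$. Once this is in place, conditions~(ii) and~(iii) plug in very cleanly, which is why the decomposition $T = T_0 + \ca{B}$ is the natural organizing principle for the whole proof.
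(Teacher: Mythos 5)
Your proposal is correct. The paper itself offers no proof here --- it explicitly leaves the lemma as ``a direct computation'' for the reader --- and your argument supplies exactly that computation, organized by the decomposition $T=T_0+\ca{B}$. The part that is genuinely new relative to the tensor/jet-module case, namely the cross terms $[T_0(\eta_1),\ca{B}(\eta_2)]$, the curvature term $[\ca{B}(\eta_1),\ca{B}(\eta_2)]=f_1f_2[B_i,B_j]$, and their comparison with $\ca{B}([\eta_1,\eta_2])$, is handled correctly, and conditions (i)--(iii) of the definition of gauge fields enter precisely where you say they do: (i) makes $\ca{B}(\eta)$ an $A_{(h)}$-linear perturbation so that the compatibility with the $A_{(h)}$-action is unaffected, (ii) is what cancels the higher-order $\rho$-terms inside the cross commutators, and (iii) is equivalent to $[\frac{\del}{\del t_i},B_j]-[\frac{\del}{\del t_j},B_i]+[B_i,B_j]=0$ because the $\frac{\del}{\del t_i}$ commute. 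Deferring the identity $[T_0(\eta_1),T_0(\eta_2)]=T_0([\eta_1,\eta_2])$ to the tensor-module computation of \cite{BN} (or \cite{Bi}) is legitimate, since the paper identifies the $B_i=0$ case with those modules. Two small corrections: the higher Leibniz rule is needed for that $T_0$-bracket identity rather than for the $A_{(h)}$-compatibility in your step (c), which only uses the ordinary product rule in the first term of (1) together with the $A_{(h)}$-linearity of $B_i$; and your displayed bracket in $\fr{L}$ has the wrong sign --- it should read $[X^k\frac{\del}{\del X_i},X^l\frac{\del}{\del X_j}]=l_iX^{k+l-e_i}\frac{\del}{\del X_j}-k_jX^{k+l-e_j}\frac{\del}{\del X_i}$. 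Neither slip affects the validity of the argument.
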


The proof of this lemma is a direct computation and we leave it as an exercise to the reader.

Identifying the Lie algebra vector fields $\D$ with its natural embedding into $\Der A_{(h)}$, we immediately obtain

\begin{lemma}
\label{actionlemma}
Together with the natural left $A$-action, the $\D$-action \eqref{gaction} above equips $A_{(h)} \otimes U$ with the structure of an $A\D$-module. 
\end{lemma}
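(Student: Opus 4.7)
The plan is to obtain this lemma as an immediate corollary of the previous one by restriction. The previous lemma establishes that formula \eqref{gaction} makes $A_{(h)} \otimes U$ into an $A_{(h)} \Der A_{(h)}$-module, and the present lemma merely observes that this structure restricts along the natural inclusions $A \hookrightarrow A_{(h)}$ and $\D \hookrightarrow \Der A_{(h)}$ to an $A\D$-module structure.

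Concretely, I would first invoke the previous lemma to fix the larger module structure. Next I would identify $\D$ with its image in $\Der A_{(h)}$; this is the embedding noted just before the definition of gauge fields, which is well defined because the third condition in the definition of chart parameters ensures that each derivation of $A$ extends uniquely to $A_{(h)}$. Restricting the $A_{(h)}$-action along $A \hookrightarrow A_{(h)}$ gives the natural left $A$-action (multiplication on the first tensor factor), and restricting the $\Der A_{(h)}$-action along $\D \hookrightarrow \Der A_{(h)}$ gives exactly the $\D$-action prescribed by formula \eqref{gaction}. The compatibility axiom for an $A\D = A \# \U(\D)$-module, namely
\[\eta \cdot (f \cdot m) = \eta(f) \cdot m + f \cdot (\eta \cdot m), \quad \eta \in \D,\ f \in A,\ m \in A_{(h)} \otimes U,\]
is then inherited from the analogous identity in the $A_{(h)} \Der A_{(h)}$-structure, with $\eta(f) \in A$ because $\D = \Der A$ preserves $A$.

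There is essentially no obstacle in this step: the substantive work --- verifying the Lie bracket relations and the Leibniz-type compatibility --- was already absorbed into the previous lemma, where axioms (i)--(iii) for the gauge fields are precisely what is needed for the brackets $[\partial/\partial t_i + B_i,\, \partial/\partial t_j + B_j]$ to act consistently on $A_{(h)} \otimes U$. The role of the present lemma is simply to repackage that result as an $A\D$-module, which is the form in which gauge modules will be used in the sequel.
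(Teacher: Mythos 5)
Your proposal matches the paper's own treatment: the paper derives this lemma immediately from the preceding one by identifying $\D$ with its natural embedding into $\Der A_{(h)}$ and restricting the module structure, exactly as you describe. The argument is correct and the approach is the same.
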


\begin{defi}
An $A\D$-submodule of a module $A_{(h)} \otimes U$, which is finitely generated as an $A$-module, will be called a {\bf local gauge module}.
\end{defi}

\begin{rmk}
Note that simple $\fr{L}_{+}$-modules $U$ correspond to simple modules over $\fr{L}_{+} / \fr{L}(1) \simeq \fr{gl}_s$. 
In this case, the third term in (\ref{gaction}) takes the simpler form
\[\sum_{k=1}^s g \frac{\del f}{\del t_k} \otimes E_{ki} \cdot u.\]
If we additionally take all $B_i$ as zero we recover the $A\D$-modules studied in~\cite{BN}. \end{rmk}

\begin{defi}
We shall say that an $A\D$-module $M$ is a {\bf gauge module} if it is isomorphic to a local gauge module for each chart $N(h)$ in our standard atlas.
\end{defi}

The conjecture from the introduction states that any module in the category $A\D$-Mod which is finitely generated over $A$ is a gauge module.

\subsection*{Example: Gauge modules of rank one on the sphere}
In this section we prove some further results about gauge modules in the case of  the sphere. It turns out that the class of gauge modules is wider than the set of tensor modules constructed in the paper~\cite{BN}.

Let $X=\Sp^2 \subset \bb{A}^3$ given by the equation $x^2+y^2+z^2=1$. We also use the notations $(x_1,x_2,x_3)=(x,y,z)$.
The Lie algebra $\D_{\Sp^2}$ of vector fields on the sphere is generated by $\Delta_{12}$, $\Delta_{23}$, and $\Delta_{31}$ 
as an $A$-module, where $\Delta_{ij}=x_j\frac{\del}{\del x_i}-x_i\frac{\del}{\del x_j}$. These generators satisfy the relation
\[x_1\Delta_{23}+x_2\Delta_{31}+x_3\Delta_{12}=0.\]

 Consider the chart $N(z)$ where $t_1=x$ and $t_2=y$ are chart parameters. Let $U=\mathrm{span}(u_{\alpha})$ be the one dimensional 
$\fr{gl}_2$-module where the identity matrix acts as $\alpha$. Taking $B_1=B_2=0$, we obtain an $A\D$-module structure 
on the space $A_{(z)}\otimes u_{\alpha}$. These modules coincide with those constructed in the paper~\cite{BN} where 
it was shown that for $\alpha \in \bb{Z}$, the space
\[\fr{F}^{z}_{\alpha} := z^{-\alpha}A\otimes u_{\alpha} \subset A_{(z)}\otimes u_{\alpha}\]
is a proper $A\D$-submodule which is free of rank $1$ over $A$.

 The analogous construction goes through for the other two standard charts $N(x)$ and $N(y)$ yielding the modules $\fr{F}^{x}_{\alpha}$ and $\fr{F}^{y}_{\alpha}$. As discussed in~\cite{BN}, these three modules are isomorphic via chart transformation maps. For example, the isomorphism $\fr{F}^{z}_{\alpha} \ra \fr{F}^{x}_{\alpha}$ is given by:  
\[ 
f \otimes u_{\alpha} \mapsto (\tfrac{z}{x})^{\alpha}f\otimes u_{\alpha}.\]
We write just $\fr{F}_{\alpha}$ for this chart-independent version of the module, this is what was called a \emph{tensor module} in~\cite{BN}.

Now, $\fr{F}^{z}_{\alpha}$ is isomorphic to $A\otimes u_{\alpha}$ as vector spaces via the map
\[A\otimes u_{\alpha} \ra \fr{F}_{\alpha} \qquad f\otimes u_{\alpha} \mapsto z^{-\alpha}f\otimes u_{\alpha}.\]
This correspondence lets us transfer the $A\D$-module structure of $\fr{F}_{\alpha}$ to $A\otimes u_{\alpha}$. 

This module structure on $A\otimes u_{\alpha}$ in fact coincides with the local gauge module structure on $A\otimes u_{\alpha}$ in the chart with $h = z$
as defined in Lemma~\ref{actionlemma}, but where we now have $B_1=B_x=-\alpha x z^{-2}$ and $B_2=B_y=-\alpha y z^{-2}$. However, in this gauge module setting,  $\alpha$ is no longer required to be an integer, and we obtain a larger class of gauge modules
$\{ \fr{F}_{\alpha} \; | \; \alpha \in {\bf k} \}$. It turns out that the action can be expressed more simply in a chart-independent way as described in the following theorem.

\begin{thm}
For each $\alpha \in {\bf k}$ we have a $\D_{\Sp^2}$-action on the space $\fr{F}_{\alpha} = A \otimes u_{\alpha}$ given by
\[(f\Delta_{ij})\cdot (g\otimes u_{\alpha}) = f\Delta_{ij}(g)\otimes u_{\alpha} + \alpha g\Delta_{ij}(f)\otimes u_{\alpha}.\]
Together with the natural $A$-action, $\fr{F}_{\alpha}$ is a simple $A\D$-module which is isomorphic to a gauge module in the sense of Lemma~\ref{actionlemma} above.
\end{thm}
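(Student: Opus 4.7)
The statement packages three claims: the displayed formula defines a $\D_{\Sp^2}$-action on $A\otimes u_\alpha$, together with the natural $A$-action this makes $\fr{F}_\alpha$ an $A\D$-module, and $\fr{F}_\alpha$ is a simple $A\D$-module isomorphic to a gauge module. My plan is to establish the first two claims simultaneously by identifying the formula with a suitable local gauge module in the standard atlas, and then to argue simplicity directly using the $SO(3)$-geometry of the sphere.

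First I check that the formula is well-defined, i.e., respects the single defining relation $x_1\Delta_{23}+x_2\Delta_{31}+x_3\Delta_{12}=0$ of $\D_{\Sp^2}$ as an $A$-module. Applying the formula to $g\otimes u_\alpha$ yields
\[\bigl(x_1\Delta_{23}+x_2\Delta_{31}+x_3\Delta_{12}\bigr)(g)\otimes u_\alpha \;+\; \alpha g\bigl(\Delta_{23}(x_1)+\Delta_{31}(x_2)+\Delta_{12}(x_3)\bigr)\otimes u_\alpha,\]
both summands of which vanish: the first by the relation in $\D$, the second because $\Delta_{jk}(x_i)=0$ whenever $\{i,j,k\}=\{1,2,3\}$, directly from the definition $\Delta_{jk}=x_k\partial_{x_j}-x_j\partial_{x_k}$.

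Next I realize the formula as a local gauge module. In the chart $N(z)$ with standard parameters $t_1=x,\,t_2=y$, take $U={\bf k}u_\alpha$ as the one-dimensional $\fr{L}_+/\fr{L}(1)\simeq\fr{gl}_2$-module on which the diagonal generators $X_i\partial_{X_i}$ act by the scalar $\alpha$, and choose gauge fields $B_i\in A_{(z)}$ proportional to $t_iz^{-2}$ (with the sign pinned down by the matching below). The three conditions defining gauge fields are immediate: $A_{(z)}$-linearity and commutation with $\rho(\fr{L}_+)$ are automatic in the one-dimensional case, while the flatness condition $\partial_{t_i}B_j=\partial_{t_j}B_i$ reduces, via the identity $\partial_{t_k}(z)=-t_k/z$ coming from differentiating $x^2+y^2+z^2=1$, to a manifest symmetry. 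By Lemma~\ref{actionlemma} this gives an $A\D$-module structure on $A_{(z)}\otimes u_\alpha$, of which $A\otimes u_\alpha$ is a finitely generated $A\D$-submodule. A direct calculation, in which one expresses $\Delta_{12}=y\partial_{t_1}-x\partial_{t_2}$, $\Delta_{23}=z\partial_{t_2}$, $\Delta_{31}=-z\partial_{t_1}$ in chart coordinates and expands the gauge formula~\eqref{gaction} on an $A$-generator $f\Delta_{ij}$, shows that the contributions from the $B_i$-term and from the degree-one part of the $\rho(X_k\partial_{X_i})$-sum combine to produce precisely $\alpha g\Delta_{ij}(f)\otimes u_\alpha$, with the higher-degree summands vanishing because $\rho$ annihilates $\fr{L}(1)$. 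This simultaneously proves that the chart-independent formula extends to an $A\D$-module structure and that $\fr{F}_\alpha$ is isomorphic to a local gauge module in $N(z)$; analogous gauge data in $N(x)$ and $N(y)$ cover the full standard atlas.

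For simplicity, observe that $\fr{F}_\alpha\cong A$ as $A$-modules, so every $A$-submodule has the form $I\otimes u_\alpha$ for an ideal $I\subseteq A$. Taking $f=1$ in the formula gives $\Delta_{ij}\cdot(g\otimes u_\alpha)=\Delta_{ij}(g)\otimes u_\alpha$, so $\D_{\Sp^2}$-invariance forces $\Delta_{ij}(I)\subseteq I$. The operators $\Delta_{12},\Delta_{23},\Delta_{31}$ span an $\fr{so}(3)\subset\D_{\Sp^2}$ integrating to the transitive $SO(3)$-action on $\Sp^2$, so the zero set $V(I)\subseteq\Sp^2$ of any $\D_{\Sp^2}$-stable ideal is $SO(3)$-invariant, hence either $\emptyset$ (forcing $I=A$ by the Nullstellensatz) or all of $\Sp^2$ (forcing $I=0$ since $A$ is reduced). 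Therefore $\fr{F}_\alpha$ is a simple $A\D$-module. The main obstacle is the matching computation in the second paragraph, which requires tracking the correct sign of $B_i$ and the correct normalization of the $\fr{gl}_2$-action on $u_\alpha$ so that the $B_i$-term and the $\rho$-term exactly cancel against $-\alpha fg\partial_{t_i}(z)/z$ arising from differentiating $z^{-\alpha}$ in the comparison with the tensor module $\fr{F}^z_\alpha$; once the signs are fixed the verification is mechanical.
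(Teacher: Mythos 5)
Your argument is correct in substance but takes a genuinely different route from the paper on the simplicity claim. The paper's own proof is two sentences: the formula is said to follow from the preceding identification of $\fr{F}_{\alpha}$ with the local gauge module on $A\otimes u_{\alpha}$ with $B_i$ proportional to $x_iz^{-2}$, and simplicity is deferred to the general Theorem~\ref{thm-gauge-simple} (smooth $X$, simple $\fr{gl}_s$-module $U$ implies the gauge module is simple), proved later via density and the Nullstellensatz. Your first two paragraphs reproduce the paper's identification route, with the welcome extra check that the formula respects the relation $x_1\Delta_{23}+x_2\Delta_{31}+x_3\Delta_{12}=0$, and with the same honest deferral of the sign/normalization bookkeeping that the paper also leaves implicit (note that your normalization, with each $X_i\partial_{X_i}$ acting by $\alpha$, is what actually makes the displayed formula come out, and it quietly differs from the paper's statement that the identity matrix acts by $\alpha$ --- a discrepancy worth flagging rather than hiding). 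Your simplicity argument, by contrast, is self-contained and specific to the sphere: submodules correspond to $\Delta$-stable ideals, whose zero sets must be empty or all of $\Sp^2$. This buys independence from the later general theorem (and avoids the mild forward reference in the paper), at the cost of generality. One step should be tightened: over a general algebraically closed field of characteristic zero you cannot literally integrate the $\Delta_{ij}$ to an $SO(3)$-action on points; the clean algebraic substitute is that the radical of a derivation-stable ideal is derivation-stable in characteristic zero, and at every point $p$ the values $\Delta_{ij}(p)$ span the full tangent plane $T_p\Sp^2$, so a nonzero $\Delta$-stable radical ideal would cut out a proper closed subset whose tangent space at a smooth point has dimension $\geq 2$ --- impossible. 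With that replacement your proof is complete.
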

\begin{proof}
The formula for the action is an easy computation and follows from the discussion above. The fact that $\fr{F}_{\alpha}$ is simple follows from the following section. 
\end{proof}

\subsection*{Simplicity of gauge modules}
Let $X\subset \bb{A}^n$ be an irreducible algebraic variety of dimension $s$.
Fix a chart $N(h)$ in the standard atlas, and let $t_1, \ldots, t_s$ be chart parameters.
\begin{prop}
\label{closedop}
Let $M$ be an $A\D$-submodule of $A_{(h)} \otimes U$, where $U$ is a finite-dimensional $\fr{gl}_s$-module with weight basis $\{u_k | k \in \Gamma\}$.
Then for $\sum_{k \in \Gamma} g_{k} \otimes u_{k} \in M$ we also have $\sum_{k \in \Gamma} (hg_k\otimes E_{ij}\cdot u_{k})\in M$ for all $1 \leq i,j \leq S$. In other words, $M$ is invariant under the operators $h\otimes E_{ij}$ on $A_{(h)}\otimes U$.
\end{prop}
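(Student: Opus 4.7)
The plan is to produce the operator $h\otimes E_{ij}$ as a difference of two natural module operations on $A_{(h)}\otimes U$, each of which preserves $M$. The essential simplification is that $U$ is a $\fr{gl}_s$-module, so $\fr{L}(1)U=0$; consequently the third summand in the gauge action \eqref{gaction} collapses to the finite sum $\sum_{\ell=1}^{s}g\frac{\del f}{\del t_\ell}\otimes E_{\ell i}\cdot u$ noted in the Remark following Lemma~\ref{actionlemma}, and all higher-derivative terms vanish.

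Since $h\frac{\del}{\del t_i}\in\D$ by the first lemma of the Preliminaries and $\D$ is an $A$-module, the vector field $t_j h\frac{\del}{\del t_i}$ also lies in $\D$ for every pair $i,j$. For $v\in A_{(h)}\otimes U$ I would set
\[T_{ji}(v) \;:=\; \left(t_j h\tfrac{\del}{\del t_i}\right)\cdot v \;-\; t_j\cdot\left[\left(h\tfrac{\del}{\del t_i}\right)\cdot v\right],\]
where the first summand is the $\D$-action of the single vector field $t_j h\frac{\del}{\del t_i}$, and the second is the $\D$-action of $h\frac{\del}{\del t_i}$ followed by multiplication by $t_j\in A$. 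Both of these operations send $M$ to $M$, so $T_{ji}(M)\subseteq M$.

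Next, I would evaluate $T_{ji}$ on a pure tensor $g\otimes u$ using \eqref{gaction}. The first two terms of \eqref{gaction} depend $A$-linearly on the coefficient $f$ of the vector field (through the expressions $f\frac{\del g}{\del t_i}$ and $gfB_i(1\otimes u)$), so for $f=t_jh$ versus $f=h$ post-multiplied by $t_j$ those contributions match and cancel. The remaining piece is controlled by the Leibniz identity $\frac{\del(t_j h)}{\del t_\ell}-t_j\frac{\del h}{\del t_\ell}=h\,\delta_{j\ell}$, which reduces the third term to
\[T_{ji}(g\otimes u) \;=\; gh\otimes E_{ji}\cdot u.\]
Extending linearly to $\sum_k g_k\otimes u_k\in M$ yields $\sum_k hg_k\otimes E_{ji}\cdot u_k\in M$, and relabeling $(j,i)\mapsto(i,j)$ gives invariance under $h\otimes E_{ij}$ for every pair.

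I do not foresee any genuine obstacle; the whole argument is a single Leibniz-rule computation on top of the $A$-linearity of the $B_i$. The one detail demanding care is precisely that automatic cancellation of the $B_i$-contributions, which works because $B_i$ is $A_{(h)}$-linear and applied to $1\otimes u$, so $t_j$ can be factored cleanly out of $gt_jhB_i(1\otimes u)$; this is why the gauge fields do not appear in the final identity and the conclusion is uniform over all admissible choices of $\{B_i\}$.
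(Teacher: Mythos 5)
Your proposal is correct and follows essentially the same route as the paper: the paper likewise forms the difference $(f\mu)\cdot(g\otimes u)-f\cdot(\mu\cdot(g\otimes u))$ with $f=t_i$ and $\mu=h\frac{\del}{\del t_j}\in\D$, and the Leibniz cancellation $\frac{\del(t_i h)}{\del t_q}-t_i\frac{\del h}{\del t_q}=h\delta_{iq}$ isolates exactly $hg\otimes E_{ij}\cdot u$. Your observation that the $A_{(h)}$-linearity of the $B_i$ makes the gauge-field terms cancel is the same (implicit) point in the paper's computation, so the two arguments coincide up to the relabeling of indices.
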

\begin{proof}
It suffices to prove the statement for a single term $g\otimes u$. For each vector field $\mu\in \D$ and for each function $f\in A$ we have
\[(f\mu)\cdot (g\otimes u)-f(\mu \cdot (g\otimes u)) \in M.\]
Taking $f=t_i$ and $\mu=h\frac{\del}{\del t_j}$ we obtain the desired element in $M$:
\[(t_ih\frac{\del}{\del t_j})\cdot (g\otimes u)-t_i(h\frac{\del}{\del t_j} \cdot (g\otimes u))\]
\[=\sum_{q=1}^s hg\frac{\del t_i}{\del t_q}\otimes E_{qj}\cdot u =  hg\otimes E_{ij}\cdot u.\]
\end{proof}

Note that minor $h$ defining the chart $N(h)$ gives rise to a filtration of $A_{(h)}\otimes U$:
\[\cdots \subset h^{k+1}A\otimes U \subset  h^{k} A\otimes U \subset h^{k-1} A\otimes U \subset \cdots\]

\begin{defi}
Let $M$ be an $A\D$-submodule of $A_{(h)}\otimes U$. 
\begin{itemize}
	\item We say that $M$ is {\bf bounded} if $M \subset  h^{j} A\otimes U$ for some $j$.
	\item We say that $M$ is {\bf dense} if $M \supset  h^{k} A\otimes U$ for some $k$.
\end{itemize}
\end{defi}

Note that $M$ is bounded if and only if $M$ is finitely generated as an $A$-module, since $A$ is noetherian.

\begin{prop}
\label{density}
Let $U$ be a finite-dimensional simple $\fr{gl}_s$-module. Then every nonzero $A\D$-submodule of $A_{(h)} \otimes U$ is dense.
\end{prop}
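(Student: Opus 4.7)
The plan is to pass from $M$ to its $A_{(h)}$-closure $M':=A_{(h)}\cdot M$, identify $M'$ as an $A_{(h)}$-submodule with $\fr{a}\otimes U$ for some nonzero ideal $\fr{a}$ of $A_{(h)}$, and then use $\D$-stability to force $\fr{a}=A_{(h)}$. Density of $M$ is equivalent to $M'=A_{(h)}\otimes U$: the forward implication is immediate by localizing $h^K A\otimes U\subset M$ at $h$, and for the converse, if $1\otimes u_i = h^{-k_i}w_i$ with $w_i\in M$ for each element $u_i$ of a basis of $U$, then setting $K=\max k_i$ and using the $A$-stability of $M$ yields $h^K A\otimes U\subset M$.

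For the structure of $M'$, Proposition~\ref{closedop} asserts that $M$ is stable under every operator $h\otimes E_{ij}$. Since these commute with $A_{(h)}$-multiplication on the first factor, the identity $(1\otimes E_{ij})(h^{-k}w)=h^{-k-1}(h\otimes E_{ij})(w)$ shows that $M'$ is stable under $1\otimes E_{ij}$ for all $i,j$. Iterating, $M'$ is stable under $1\otimes X$ for every $X$ in the image of $\U(\fr{gl}_s)$ inside $\mathrm{End}_{\bf k}(U)$, and because $U$ is a finite-dimensional simple $\fr{gl}_s$-module, this image is all of $\mathrm{End}_{\bf k}(U)$ by the Jacobson density theorem (Burnside). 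Thus $M'$ is a submodule of $A_{(h)}\otimes U$ under the action of the $A_{(h)}$-algebra $A_{(h)}\otimes\mathrm{End}_{\bf k}(U)$, a matrix algebra over $A_{(h)}$; by Morita equivalence, $M'=\fr{a}\otimes U$ for some ideal $\fr{a}$ of $A_{(h)}$, and $\fr{a}\neq 0$ since $M\neq 0$.

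Next I would check that $M'$ is $\D$-stable (which follows from $\D$-stability of $M$ and the compatibility $\mu\cdot(h^{-k}w)=\mu(h^{-k})w+h^{-k}(\mu\cdot w)$) and deduce the same for $\fr{a}$. Applying formula~\eqref{gaction} to $a\otimes u$ for $a\in\fr{a}$, every summand on the right-hand side except $f\tfrac{\del a}{\del t_i}\otimes u$ has $a$ as an $A_{(h)}$-scalar factor and so lies in $\fr{a}\otimes U$ automatically; the remaining summand forces $\mu(a)\in\fr{a}$ for every $\mu\in\D$, i.e.\ $\fr{a}$ is a $\D$-stable ideal of $A_{(h)}$. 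Because $h\tfrac{\del}{\del t_i}\in\D$ and $h$ is invertible in $A_{(h)}$, one has $A_{(h)}\cdot\D=\Der A_{(h)}$, so $\fr{a}$ is actually stable under every derivation of $A_{(h)}$. The chart $N(h)$ being a smooth irreducible affine open subset of $X$ over an algebraically closed field of characteristic zero, a standard argument applies: at any hypothetical $p\in V(\fr{a})$, an element $a\in\fr{a}$ of minimal vanishing order at $p$ may be differentiated with respect to local parameters to produce an element of $\fr{a}$ of strictly smaller vanishing order, contradicting minimality. Hence $V(\fr{a})=\emptyset$, $\fr{a}=A_{(h)}$, and $M$ is dense.

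The main obstacle is organizational: $M$ itself is only stable under $h\otimes E_{ij}$ rather than $1\otimes E_{ij}$, so Burnside/Jacobson density cannot be invoked directly on $M$. Passing to the $A_{(h)}$-closure $M'$ converts the extra factor of $h$ into a unit and allows the Morita-type identification $M'=\fr{a}\otimes U$, after which the reduction to a statement about derivation-stable ideals of a smooth affine ring is routine.
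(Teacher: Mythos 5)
Your proof is correct, but it is organized quite differently from the paper's. The paper works entirely inside $A$: it forms the ideal $I=\{f\in A\mid f(A\otimes U)\subset M\}$, uses the Jacobson density theorem together with the operators $h\otimes E_{ij}$ of Proposition~\ref{closedop} to place explicit elements $h^N f_0$ in $I$, then shows by applying $h\frac{\del}{\del t_i}$ (with a power $h^K$ inserted to clear the poles of the gauge fields $B_i$) that $f_0$ may be replaced by $\frac{\del f_0}{\del t_i}$, and finally invokes the Nullstellensatz to get $h\in\sqrt{I}$. You instead localize the submodule to $M'=A_{(h)}M$, which converts the $h\otimes E_{ij}$-stability into $1\otimes E_{ij}$-stability, and then factor the argument through two clean structural facts: Burnside plus Morita theory identifies $M'$ with $\fr{a}\otimes U$ for an ideal $\fr{a}$ of $A_{(h)}$, and the differential simplicity of a smooth affine $\bf k$-algebra in characteristic zero (every point of $N(h)$ is non-singular, and Lemma~\ref{locpar} supplies the local parameters needed for your minimal-vanishing-order argument, which is the same differentiation trick the paper applies to $f_0$) forces $\fr{a}=A_{(h)}$. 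Your reduction of density of $M$ to $M'=A_{(h)}\otimes U$ is also verified correctly. What your route buys is conceptual transparency and freedom from the bookkeeping of powers of $h$ and the constant $K$; what the paper's route buys is a self-contained, elementary argument that produces the explicit power $h^N$ witnessing density without appealing to Morita equivalence or to the $\Der$-simplicity of regular rings as a named input. Both proofs rest on the same two pillars: Proposition~\ref{closedop} and the simplicity of $U$ as a $\fr{gl}_s$-module.
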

\begin{proof}
Let $M \subset A_{(h)} \otimes U$ be a nonzero submodule. Let
 \[I=\{f \in A \, | \, f (A \otimes U) \subset M\}\]
Then $I$ is an ideal of $A$. To show that $M$ is dense we need to show that $h^N\in I$ for some $N$.

Let $\Gamma$ be a weight basis for $U$. Let $v\in M$ and write this element in the form $v=\sum_{k \in \Gamma}f_{k}\otimes u_{k}$ with $f_{k} \in A_{(h)}$, in fact we shall assume that $f_k \in A$ (otherwise just multiply $v$ by a power of $h$). 

Fix an index $k_{0}$ such that $f_{k_0}$ is nonzero. The
Jacobson density theorem implies that for each $k\in \Gamma$ there exists 
$w_k\in \U(\fr{gl}_s)$ such that $w_ku_{k_0}=u_k$ and $w_ku_{i}=0$ for $i\neq k_0$. Fix an ordering among the $E_{ij}$ and express $w_k$ in the corresponding PBW-basis and let $r$ be the highest length of terms occurring in this expression of $w_k$.
For products in $\U(\fr{gl}_s)$ of length $t$ where $0\leq t \leq r$, define the correspondence
\[E_{i_1j_1}\cdots E_{i_tj_t} \mapsto h^{r-t}(h\otimes E_{i_1j_1})\cdots(h\otimes E_{i_tj_t}).\]
Here the right side is viewed as an element of $\mathrm{End}_{\bf k}(M)$ in accordance with Proposition~\ref{closedop}.
Then by construction the element corresponding to $w_k$ maps $v$ to $h^rf_0\otimes u_k$.
Here$f_0:=f_{k_0}$ and $r$ depends on $k$. Letting $N$ be the maximum of the $r$-values we conclude that $h^Nf_0\otimes u_k \in M$ for all $k\in \Gamma$ which means that $h^Nf_0(A\otimes U) \subset M$ and $h^Nf_0 \in I$.

We now aim to apply Hilbert's Nullstellensatz to the function $h$. Fix $p\in N(h)$. We need to show that there exists $f\in I$ with $f(p)\neq 0$. We had already found $h^Nf_0\in I$ so if $f_0(p)\neq 0$ we are done.
Otherwise, let $K$ be a positive integer such that $h^K B_i(U) \subset A\otimes U$ for all $i$ and consider the element
$h\frac{\del}{\del t_i} (h^{N+K}f_0\otimes u_{k}) \in M$. This expands as
\[(N+K)f_0h^{N+K}\frac{\del h}{\del t_i}\otimes u_{k} + h^{N+K+1}\frac{\del f_0}{\del t_i}\otimes u_{k} \]
\[+ h^{N+K+1}f_0 B_i(u_k)
+ h^{N+K}f_{0}\sum_{q=1}^{s} \frac{\del h}{\del t_q} \otimes E_{qi}\cdot u_{k}.\]

Now the first, third, and fourth terms lie in $h^Nf_0(A\otimes U) \subset M$, so we also get $h^{N+K+1}\frac{\del f_0}{\del t_i}\otimes u_{k} \in M$ for all $i$. This shows that we may replace $f_0$ by $\frac{\del f_0}{\del t_i}$ in the argument. 

There is some product $d$ of derivations with $d(f_0)(p) \neq 0$.
So acting repeatedly with vector fields of form $h\frac{\del}{\del t_i}$ as above we eventually obtain $h^{S} d(f_0)\in I$ for some large enough $S$, and $h^{S} d(f_0)$ is nonzero at $p$. Thus for every point $p\in N(h)$ we have found a function in $I$ which is nonzero at $p$. Thus we have shown the contrapositive of the following statement: $h(p)=0$ whenever $p$ is a common zero for $I$. By Hilbert's Nullstellensatz this implies that $h^N \in I$ for some $N$, which in turn means that $M$ is dense. 
\end{proof}

\begin{cor}
Let $A_{(h)}\otimes U$ be an $A\D$-module as in Lemma \ref{actionlemma}, where $U$ is a simple  $\fr{gl}_s$-module. Then there exists at most one simple $A\D$-submodule of $A_{(h)}\otimes U$.
\end{cor}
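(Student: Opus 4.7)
The plan is to deduce the corollary directly from the density result of Proposition~\ref{density}. The idea is that two simple submodules of $A_{(h)}\otimes U$ cannot avoid each other, because both must contain a sufficiently high power of $h$ times the whole of $A\otimes U$, and these tails have nonzero intersection.

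More precisely, suppose for contradiction that $M_1$ and $M_2$ are two distinct simple $A\D$-submodules of $A_{(h)}\otimes U$. By Proposition~\ref{density}, since $U$ is a simple $\fr{gl}_s$-module and $M_1, M_2$ are nonzero, each $M_i$ is dense: there exist nonnegative integers $k_1, k_2$ such that
\[
h^{k_1} A\otimes U \subset M_1 \qquad \text{and} \qquad h^{k_2} A\otimes U \subset M_2.
\]
Setting $N := \max(k_1, k_2)$, we obtain $h^{N}A\otimes U \subset M_1\cap M_2$. Since $h$ is a nonzero element of the integral domain $A_{(h)}$ and $U \neq 0$, the space $h^N A\otimes U$ is nonzero inside $A_{(h)}\otimes U$. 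Hence $M_1\cap M_2$ is a nonzero $A\D$-submodule of both $M_1$ and $M_2$.

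By simplicity of $M_1$ we get $M_1\cap M_2 = M_1$, and by simplicity of $M_2$ we get $M_1\cap M_2 = M_2$, so $M_1 = M_2$, contradicting the assumption that they were distinct. The main (and in fact only) substantive ingredient is Proposition~\ref{density}; everything else is the standard observation that density of all nonzero submodules forces any two simple submodules to coincide. No further obstacle arises.
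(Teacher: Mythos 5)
Your proposal is correct and follows essentially the same argument as the paper: both invoke Proposition~\ref{density} to show that two simple submodules each contain $h^N A\otimes U$ for large $N$, hence have nonzero intersection, and then conclude by simplicity. The only cosmetic difference is that you phrase it as a proof by contradiction while the paper argues directly, but the content is identical.
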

\begin{proof}
Let $M$ and $M'$ be simple submodules in $A_{(h)} \otimes U$. By  Proposition~\ref{density} both modules are dense, so they both contain $h^N A\otimes U$ for sufficiently large $N$. Thus $M \cap M'$ is a nonzero submodule of both $M$ and $M'$ so by simplicity we must have $M=M'$.
\end{proof}

\begin{thm}\label{thm-gauge-simple}
Let $X$ be a smooth irreducible affine algebraic variety and let $M$ be a gauge module which corresponds to a simple finite-dimensional $\fr{gl}_s$-module $U$. 
Then $M$ is a simple $A\D$-module. 
\end{thm}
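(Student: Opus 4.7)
The strategy is to combine the local density result of Proposition~\ref{density}, which holds chart-by-chart, with Hilbert's Nullstellensatz to glue the local information into global simplicity. Let $N$ be a nonzero $A\D$-submodule of $M$; the goal is to show $N = M$.

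For each chart $N(h_i)$ in the standard atlas, the gauge module hypothesis provides an injective $A\D$-morphism $\phi_i : M \hookrightarrow A_{(h_i)} \otimes U$ whose image is a local gauge module, in particular finitely generated over $A$ and hence bounded: $\phi_i(M) \subset h_i^{j_i} A \otimes U$ for some integer $j_i$. Injectivity of $\phi_i$ ensures that $\phi_i(N)$ is a nonzero $A\D$-submodule of $A_{(h_i)} \otimes U$, so Proposition~\ref{density} (applicable because $U$ is a simple $\fr{gl}_s$-module) produces an integer $k_i$ with $\phi_i(N) \supset h_i^{k_i} A \otimes U$. Since the containment only strengthens as $k_i$ grows, we may assume $m_i := k_i - j_i \geq 1$; then
\[
h_i^{m_i} \phi_i(M) \subset h_i^{k_i} A \otimes U \subset \phi_i(N),
\]
which pulls back to $h_i^{m_i} M \subset N$ in $M$.

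Smoothness of $X$ guarantees that the charts $N(h_i)$ cover $X$, so the functions $h_i^{m_i}$ have no common zero on $X$. Applying Hilbert's Nullstellensatz in the coordinate ring $A$, the ideal $(h_1^{m_1}, \ldots, h_r^{m_r})$ equals $A$, so we may write $1 = \sum_i a_i h_i^{m_i}$ with $a_i \in A$. Then
\[
M = 1 \cdot M = \sum_i a_i h_i^{m_i} M \subset N,
\]
forcing $N = M$. The point I expect to require the most care is Step 1, namely the bookkeeping between the local embeddings $\phi_i$ in different charts, ensuring that the inclusion $h_i^{m_i} M \subset N$ derived in each chart is a genuine global statement about $M$; once this is clear, the remainder is a clean local-to-global gluing. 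The hypothesis that $U$ is simple enters only to invoke Proposition~\ref{density}, and smoothness of $X$ is used only to guarantee that the standard atlas covers $X$.
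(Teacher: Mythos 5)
Your proof is correct and follows essentially the same route as the paper: local density (Proposition~\ref{density}) in each chart of the standard atlas, followed by Hilbert's Nullstellensatz applied to the covering $X=\cup_i N(h_i)$ to glue the local containments into $N=M$. In fact your bookkeeping with the bounding exponent $j_i$ (so that $h_i^{m_i}M\subset N$ rather than just $h_i^{k_i}A\otimes U\subset \phi_i(N)$) makes explicit a small step the paper's proof leaves implicit when it asserts $h_i^{k_i}\in I$.
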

\begin{proof}
Let $M'$ be a nonzero submodule of $M$ and define
\[I=\{f \in A \, | \, f M \subset M'\}\]
Then $I$ is an ideal and it does not depend on the chart we use. Let $\{h_i\}$ be the standard minors giving our atlas for $X$. Proposition~\ref{density} implies that there exist natural numbers $\{k_i\}$ such that $h_{i}^{k_i} \in I$ for all $i$. But since $X=\cup_{i} N(h_i)$, for each $p\in X$ we have $h_i(p)\neq 0$ for some index $i$. But then the set of common zeros 
is empty, and Hilbert's weak Nullstellensatz gives $1\in I$ \cite{S}. In view of the definition of $I$ this says that $M=M'$.
\end{proof}


Theorem \ref{rudakovsimple} and Theorem \ref{thm-gauge-simple} imply our Main Theorem.

\section*{Pairing between gauge modules and Rudakov modules}

Let $M$ be an $A\D$-module which is finitely generated over $A$, and let $p$ be a non-singular point of $X$. Define $U:=M/\fr{m}_pM$.

\begin{lemma}
The space $U$ is an $A\#\U(\D_{+})$-module.
\end{lemma}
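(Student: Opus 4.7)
The plan is to verify the three conditions making $U$ into an $A\#\U(\D_+)$-module: a well-defined $A$-action, a well-defined $\D_+$-action, and the smash product compatibility relation. The key observation is that by definition $\D_+ = \D(0)$ is characterized by $\eta(A) \subseteq \fr{m}_p$, which simultaneously makes $\fr{m}_p M$ stable under $\D_+$ and forces the residual $A$-action on $U$ to collapse to evaluation at $p$, exactly as in the setup used to construct Rudakov modules.

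First, since $\fr{m}_p M$ is an $A$-submodule of $M$, the quotient $U = M/\fr{m}_p M$ inherits an $A$-module structure, and for $f \in \fr{m}_p$ the class $f \cdot \overline{m}$ is zero. Hence the $A$-action factors through $A/\fr{m}_p \cong {\bf k}$ and is given by $f \cdot \overline{m} = f(p)\overline{m}$. To descend the $\D_+$-action I would show that $\D_+ \cdot \fr{m}_p M \subseteq \fr{m}_p M$: for $\eta \in \D_+$, $f \in \fr{m}_p$, and $m \in M$, the compatibility of the actions on $M$ gives
\[\eta \cdot (fm) = \eta(f) m + f(\eta \cdot m),\]
and since $\eta(A) \subseteq \fr{m}_p$ by definition of $\D(0)$, both summands lie in $\fr{m}_p M$. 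Thus $\eta \cdot \overline{m} := \overline{\eta \cdot m}$ is well defined on $U$.

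Finally I would check the smash product compatibility on $U$. For $\eta \in \D_+$, $f \in A$, $\overline{m} \in U$, one side reads $\eta \cdot (f \cdot \overline{m}) = f(p)(\eta \cdot \overline{m})$, while the other expands as $f \cdot (\eta \cdot \overline{m}) + \eta(f) \cdot \overline{m} = f(p)(\eta \cdot \overline{m}) + \eta(f)(p)\overline{m}$. Because $\eta(f) \in \fr{m}_p$, we have $\eta(f)(p) = 0$ and the two sides coincide. No step constitutes a serious obstacle; the entire argument is a direct consequence of the defining property $\eta(A) \subseteq \fr{m}_p$ of elements of $\D_+$.
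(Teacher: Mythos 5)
Your proposal is correct and follows essentially the same route as the paper: the key point in both is that $\eta(f)\in\fr{m}_p$ for $\eta\in\D_+$ and $f\in\fr{m}_p$, which makes $\fr{m}_pM$ an $A\#\U(\D_+)$-submodule so that the quotient $U$ inherits the module structure. Your extra verification of the smash-product compatibility on the quotient is harmless but not needed, since a quotient of an $A\#\U(\D_+)$-module by a submodule is automatically an $A\#\U(\D_+)$-module.
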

\begin{proof}
We first verify that $\fr{m}_p M$ is a $\D_{+}$-submodule of $M$. Let $\mu \in \D_{+}$, $f\in \fr{m}_p$, and $m\in M$. Since $M$ is an $A\D$-module we have
\[\mu \cdot(f \cdot m )= \mu(f) \cdot m + f\cdot (\mu \cdot m).\]
Here $\mu(f) \in \fr{m}_p$ by the definition of $\D_{+}$, so the right side is clearly in $\fr{m}_p M$. But $\fr{m}_p M$ is also an $A$-submodule of $M$. Thus $\fr{m}_p M$ is an $A\#\U(\D_{+})$-submodule of $M$, and so is the quotient $U=M/\fr{m}_pM$.
\end{proof}

Note that $U$ is an evaluation module over $A$: we have $f \cdot u = f(p)u$.

\begin{lemma}
The module $U$ is finite-dimensional.
\end{lemma}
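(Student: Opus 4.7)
The plan is to use the fact that $M$ is finitely generated over $A$ together with the identification of the residue field at the point $p$. First I would pick a finite generating set $m_1, \ldots, m_k \in M$ of $M$ as an $A$-module; then every element of $M$ can be written as $\sum_{i=1}^k f_i m_i$ with $f_i \in A$. Passing to the quotient $U = M / \fr{m}_p M$, I would observe that the images $\bar m_1, \ldots, \bar m_k$ still span $U$ as an $A$-module, and by construction $\fr{m}_p$ acts trivially on $U$, so $U$ is in fact a module over $A / \fr{m}_p$.

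Since $p$ is a point of $X$ and $\bf k$ is algebraically closed, Hilbert's Nullstellensatz gives $A / \fr{m}_p \simeq {\bf k}$. Hence $U$ is a ${\bf k}$-vector space spanned by $\bar m_1, \ldots, \bar m_k$, so $\dim_{\bf k} U \leq k < \infty$.

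There is no real obstacle here: the argument is essentially a one-line consequence of finite generation plus the Nullstellensatz (equivalently, of Nakayama's lemma applied at the maximal ideal $\fr{m}_p$). The only thing to double-check is that passing to the quotient does not introduce any issues with the $\D_+$-action, but since finite-dimensionality is purely a statement about the underlying vector space, the module structure plays no role in this lemma.
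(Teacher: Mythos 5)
Your argument is correct and is essentially identical to the paper's proof: both reduce the generating set of $M$ over $A$ modulo $\fr{m}_p M$, using that $A/\fr{m}_p \simeq {\bf k}$ (the paper writes this as evaluation $f \mapsto f(p)$) to conclude that the images of the generators span $U$ over ${\bf k}$.
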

\begin{proof}
Let $u_1, \ldots, u_k$ generate $M$ over $A$. Then any $m\in M$ can be expressed as $m=f_1u_1+\ldots + f_ku_k$ for some $f_i \in A$. But then $\ol{m}=f_1(p)u_1+\ldots+f_k(p)u_k$ in the quotient $U$, which shows that the images of the $u_i$ span $U$.
\end{proof}

Let $U^{*} = \Hom_{\bf k}(U,{\bf k})$ be the dual space of $U$. This is an $A\D_{+}$-module with the standard dual actions of $A$ and of $\D_{+}$.

Write $\langle - , - \rangle$ for the natural pairing $U \times U^{*} \ra {\bf k}$ where $\langle u , \varphi\rangle = \varphi(u)$.
This pairing satisfies the following compatibility conditions for the actions of $f\in A$ and of $\eta \in \D_{+}$:
\[\langle u, f\cdot \varphi \rangle = \langle f\cdot u, \varphi \rangle = f(p)\langle u, \varphi \rangle\]
\[\langle u, \eta \cdot \varphi \rangle=-\langle \eta \cdot u, \varphi \rangle\]

Define a map $\tau: A\#\U(\D_{+}) \ra A\#\U(\D_{+})$ by requiring $\tau|_{A} = id$ and $\tau|_{\D_{+}}=-id$. Then $\tau$ extends uniquely to an anti-involution of $A\#\U(\D_{+})$. Then for $w\in  A\#\U(\D_{+})$ we have $\langle w\cdot u,\varphi\rangle=\langle u,\tau(w)\cdot\varphi\rangle$.

Now consider the canonical projection $\pi: M \ra U$ of $A\D_{+}$ modules. This gives rise to an $A\D_{+}$-morphism of the duals: $\pi^{*}: U^{*} \ra M^{*}$.
Consider the Rudakov module corresponding to the $A\D_{+}$-module $U^{*}$: \[R_p(U^{*}) =A\#\U(\D) \otimes_{A\#\U(\D_{+})}U^{*}.\]

\begin{prop}
The canonical $A\D_{+}$-homomorphism $\pi^{*}: U^{*} \ra M^{*}$ extends uniquely to an $A\D$-homomorphism $\ol{\pi}^{*}: R_p(U^{*}) \ra M^{*}$.
\end{prop}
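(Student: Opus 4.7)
The statement is precisely the universal property (Frobenius reciprocity / tensor-Hom adjunction) for the induced module $R_p(U^{*}) = A\#\U(\D) \otimes_{A\#\U(\D_{+})} U^{*}$, applied with $M^{*}$ in the target. So the plan is to invoke this universal property, after checking that the ingredients have the correct module-theoretic structure.

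First I would verify that $\pi^{*}: U^{*} \to M^{*}$ is genuinely an $A\D_{+}$-homomorphism. The projection $\pi: M \to U = M/\fr{m}_p M$ is $A$-linear by construction, and it is $\D_{+}$-linear because the previous lemma of this section shows that $\fr{m}_p M$ is a $\D_{+}$-submodule of $M$. Dualizing via the pairing $\langle -, - \rangle$, and using the compatibility rules $\langle u, f\cdot\varphi\rangle = \langle f\cdot u,\varphi\rangle$ and $\langle u,\eta\cdot\varphi\rangle = -\langle \eta\cdot u,\varphi\rangle$ displayed just above, we see that $\pi^{*}$ intertwines the $A\D_{+}$-actions (equivalently, it intertwines the $\tau$-twisted actions consistently on both sides).

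Next I would define the extension by
$$\ol{\pi}^{*}(w \otimes \varphi) := w \cdot \pi^{*}(\varphi), \qquad w \in A\#\U(\D),\ \varphi \in U^{*},$$
where the right-hand side uses the $A\D$-module structure on $M^{*}$ recalled in the Preliminaries. Well-definedness is the balancing condition over $A\#\U(\D_{+})$: for any $a \in A\#\U(\D_{+})$ one needs $(wa)\cdot \pi^{*}(\varphi) = w \cdot (a \cdot \pi^{*}(\varphi))$, which is immediate because $\pi^{*}$ is $A\D_{+}$-linear and $M^{*}$ carries an $A\#\U(\D)$-action restricting to the given $A\#\U(\D_{+})$-action. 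It is then clear from the formula that $\ol{\pi}^{*}$ is $A\D$-linear, and it extends $\pi^{*}$ via $\varphi \mapsto 1 \otimes \varphi$.

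Uniqueness follows because $R_p(U^{*})$ is generated as an $A\D$-module by $1 \otimes U^{*}$: any $A\D$-extension $\widetilde{\pi}$ of $\pi^{*}$ must satisfy $\widetilde{\pi}(1\otimes\varphi)=\pi^{*}(\varphi)$, whence by $A\D$-linearity $\widetilde{\pi}(w\otimes\varphi)=w\cdot\pi^{*}(\varphi)=\ol{\pi}^{*}(w\otimes\varphi)$ everywhere. I do not anticipate a real obstacle; the argument is the standard induction adjunction for the smash product $A\#\U(\D)$ and its subalgebra $A\#\U(\D_{+})$. The only point requiring care is matching sign conventions in the $\D$-action on the full dual $M^{*}$ with those induced on $R_p(U^{*})$, but this is exactly why the hypothesis was stated at the level of an $A\D_{+}$-map, not an $A\D$-map.
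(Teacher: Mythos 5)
Your proposal is correct and follows essentially the same route as the paper: the paper simply invokes the adjunction $\Hom_{A\D_{+}}(U^{*},\Res M^{*}) \simeq \Hom_{A\D}(\Ind_{A\D_{+}}^{A\D} U^{*}, M^{*})$, and your argument just unpacks that adjunction explicitly via the formula $\ol{\pi}^{*}(w\otimes\varphi)=w\cdot\pi^{*}(\varphi)$ together with the well-definedness and uniqueness checks.
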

\begin{proof}
This follows by the adjunction between induction and restriction:
\[\Hom_{A\D_{+}}(U^{*},M^{*}) \simeq \Hom_{A\D_{+}}(U^{*},\Res_{A\D_{+}}^{A\D} M^{*})\]
\[\simeq \Hom_{A\D}(\Ind_{A\D_{+}}^{A\D} U^{*}, M^{*})\simeq \Hom_{A\D}(R(U^{*}), M^{*})\]
\end{proof}

We summarize the results of the present section. 
\begin{thm}
Let $X$ be an algebraic variety and let $p$ be a non-singular point on $X$. Let $M$ be an $A\D_X$-module which is finitely generated over $A$. Define $U:=M / \fr{m}_pM$ and let
\[R_p(U^{*})=A\#\U(\D) \otimes_{A\#\U(\D_{+})} U^{*}\]
be the corresponding Rudakov module. 
Then there is a natural pairing between the modules $M$ and $R_p(U^{*})$ given by
\[\langle m , r \rangle = \ol{\pi}^{*}(r)(m),\]
where $\ol{\pi}^{*}$ is the canonical extension of the morphism $\pi^{*}: U^{*} \ra M^{*}$ to $R_p(U^{*})$.
This pairing satisfies 
\[\langle f\cdot m,r\rangle=\langle m,f\cdot r\rangle \qquad \text{ and } \qquad 
\langle \eta\cdot m,r\rangle=-\langle m,\eta\cdot r\rangle\]
 for all $f\in A$, $\eta \in \D$, $m\in M$, and $r\in R_p(U^{*})$. Equivalently, we have
\[\langle w\cdot m,r\rangle=\langle m,\tau(w)\cdot r\rangle\] for all $w\in A\#\U(\D)$, where $\tau$ is the natural anti-involution on $A\#\U(\D)$.
\end{thm}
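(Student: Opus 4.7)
The plan is to treat the theorem as a structural packaging of what the preceding Proposition already provides. The Proposition constructs an $A\D$-homomorphism $\ol{\pi}^{*}:R_p(U^{*}) \to M^{*}$, and this is exactly what is needed so that the formula $\langle m,r\rangle := \ol{\pi}^{*}(r)(m)$ defines a bilinear pairing $M\times R_p(U^{*}) \to {\bf k}$. The three compatibility statements then reduce to a routine combination of two ingredients: (a) $\ol{\pi}^{*}$ is both $A$-linear and $\D$-linear, and (b) the defining formulas for the dual $A\D$-module structure on $M^{*}$ recalled in the Preliminaries, namely $(f\cdot\varphi)(m)=\varphi(f\cdot m)$ and $(\eta\cdot\varphi)(m)=-\varphi(\eta\cdot m)$.

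First I would verify the $A$-equivariance. Unwinding the definitions,
\[\langle f\cdot m,r\rangle = \ol{\pi}^{*}(r)(f\cdot m) = \bigl(f\cdot \ol{\pi}^{*}(r)\bigr)(m) = \ol{\pi}^{*}(f\cdot r)(m) = \langle m, f\cdot r\rangle,\]
where the second equality is the definition of the $A$-action on $M^{*}$ and the third is the $A$-linearity of $\ol{\pi}^{*}$. The same chain of equalities with $\eta\in\D$ in place of $f$ automatically supplies the minus sign from the dual action of vector fields and yields $\langle \eta\cdot m,r\rangle = -\langle m,\eta\cdot r\rangle$.

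The equivariance under the anti-involution $\tau$ is then formal. Both $w\mapsto \langle w\cdot m,r\rangle$ and $w\mapsto \langle m,\tau(w)\cdot r\rangle$ are ${\bf k}$-linear in $w$, and they coincide on the two generating subspaces $A$ and $\D$ of $A\#\U(\D)$ by the two cases just proved. If the identity holds for $w_1$ and $w_2$, then, using $\tau(w_1 w_2)=\tau(w_2)\tau(w_1)$,
\[\langle w_1 w_2\cdot m,r\rangle = \langle w_2\cdot m,\tau(w_1)\cdot r\rangle = \langle m,\tau(w_2)\tau(w_1)\cdot r\rangle = \langle m,\tau(w_1 w_2)\cdot r\rangle,\]
so the identity is stable under products. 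A straightforward induction on the PBW-style length filtration of $A\#\U(\D)$ then extends the identity from $A\cup \D$ to all of $A\#\U(\D)$.

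I do not anticipate any genuine obstacle: the real conceptual work, namely the construction of the extension $\ol{\pi}^{*}$ as an $A\D$-morphism on the induced module $R_p(U^{*})$, has already been dispatched in the preceding Proposition through the $(\Ind,\Res)$ adjunction. What remains in the theorem is essentially a summary collecting the consequences of that construction, so the proof amounts to carefully unfolding the relevant definitions rather than introducing a new idea. The only small point warranting explicit mention is that $\tau$ really extends to an anti-involution of the full smash product $A\#\U(\D)$; this is immediate since the defining relation $\eta f - f\eta = \eta(f)$ is preserved by the anti-multiplicative extension of $\tau$.
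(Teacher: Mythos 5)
Your proposal is correct and matches the paper's intent: the paper gives no separate proof of this theorem, presenting it explicitly as a summary of the section, so the argument is exactly the unwinding you describe --- the preceding Proposition supplies the $A\D$-morphism $\ol{\pi}^{*}$, and the compatibility identities follow from the dual-module formulas on $M^{*}$ together with multiplicativity for the $\tau$-version.
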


\end{document}